\newcommand{\lp}{\left(}
\newcommand{\rp}{\right)}
\newcommand{\eps}{\varepsilon}
\newcommand{\Id}{\mbox{Id}}
\newcommand{\Sd}{{\mathbb{S}^{d-1}}}
\newcommand{\beqar}{\begin{eqnarray*}}
\newcommand{\eeqar}{\end{eqnarray*}}
\newcommand{\beqarl}{\begin{eqnarray}}
\newcommand{\eeqarl}{\end{eqnarray}}
\newcommand{\be}{\begin{equation}}
\newcommand{\ee}{\end{equation}}
\newcommand{\ud}{\;\mathrm{d}} 
\newcommand{\uud}{\mathrm{d}}
\providecommand{\J}{\mathcal{J}}
\providecommand{\Id}{\operatorname{Id}}
\providecommand{\eps}{\varepsilon}
\providecommand{\exp}{\operatorname{exp}}
\providecommand{\La}{\mathcal{L}}
\providecommand{\TT}{\mathbb{T}}
\providecommand{\Sp}{\mathbb{S}}
\newtheorem{theorem}{Theorem}
\newtheorem{corollary}[theorem]{Corollary}
\newtheorem{proposition}[theorem]{Proposition}
\newtheorem{definition}[theorem]{Definition}
\newtheorem{lemma}[theorem]{Lemma}
\newtheorem{remark}[theorem]{Remark}
\numberwithin{equation}{section}
\numberwithin{theorem}{section}
\DeclareMathOperator{\sign}{\mathrm{sign}} 
\DeclareMathOperator*{\esssup}{ess\,sup}
\newcommand{\cc}{\mathfrak c}
\newcommand{\Reals}{{\mathbb R}}
\newcommand{\R}{{\mathbb R}}
\newcommand{\T}{{\mathbb T}}
\renewcommand{\varrho}{{\rho}}
\newcommand{\ep}{\varepsilon}
\newcommand{\intall}{\int_{\TT^d\times \Sd}}
\newcommand{\me}{\mathcal{S}}
\title{Stability of equilibria of the spatially inhomogeneous Vicsek-BGK equation across a bifurcation}
\author{Sara Merino-Aceituno\footnote{Faculty of Mathematics, University of Vienna, Oskar Morgenstern-Platz 1, 1090 Wien, Austria}, Christian Schmeiser$^*$, Raphael Winter\footnote{School of Mathematics, Cardiff University, Abacws, Senghennydd Road, Cathays, Cardiff CF24 4AG,
UK} }
\begin{document}

\maketitle

\begin{abstract}
The Vicsek-BGK equation is a kinetic model for alignment of particles moving with constant speed between stochastic reorientation events with sampling from a von Mises
distribution. The spatially homogeneous model shows a steady state bifurcation with exchange of stability. The main result of this work is an extension of the bifurcation result
to the spatially inhomogeneous problem under the additional assumption of a sufficiently large Knudsen number.
The mathematical core is
the proof of linearized stability, which employs a new hypocoercivity approach based on Laplace-Fourier transformation. The bifurcation result includes global existence of
smooth solutions for close-to-equilibrium initial data. For large data smooth solutions might blow up in finite time whereas weak solutions with bounded Boltzmann entropy are
shown to exist globally.
\end{abstract}


\paragraph{Acknowledgment:} The authors thank Anton Arnold, Stefan Egger and Amic Frouvelle for fruitful discussions and corrections on the preprint version. This work has been supported by the Austrian Science Fund, grant no. F65.  The work of SMA was supported  by the Vienna Science  and  Technology  Fund  (WWTF)  [10.47379/VRG17014].

\section{Introduction}

\paragraph{The Vicsek-BGK equation} is a kinetic model for alignment dynamics. It describes the time-evolution of a particle distribution in position and direction of motion. 



Let $\T^d$ be the (flat) $d$-dimensional torus and $\mathbb{S}^{d-1}$ the unit sphere in $\R^d$, $d\ge 2$. The initial value problem for the Vicsek-BGK equation for the particle density $F=F(t,x,\omega)$, depending on position $x\in\T^d$, movement direction $\omega\in \Sp^{d-1}$, and time $t\geq 0$, is given by
\begin{equation}\label{nonlinearBGK}
    \begin{aligned} 
    \partial_t F +  \gamma \omega \cdot \nabla_x F &= \rho_F M_{J_F} - F \,, \\
        F(0,x,\omega) &= F^\circ(x,\omega) \,,
\end{aligned}
\end{equation}
where $\gamma>0$ is a given parameter and $F^\circ\ge 0$ denotes the initial datum. The position density $\rho_F$, the flux $J_F$, and the von Mises distribution $M_J$ are given by
\begin{align}
    \rho_F(t,x)   &:= \int_{\Sd} F(t,x,\omega) \ud{\omega} \,, \\
    J_F(t,x)      &:= \int_{\Sd} \omega F(t,x,w) \ud{\omega} \,,\\
    M_J(w)&:= \frac{\exp(\omega \cdot J)}{Z(J)} \,, \qquad\mbox{with } Z(J):= \int_{\Sd} \exp(\omega \cdot J)\, \ud{\omega} \,. \label{def:MJ}
\end{align}
The left hand side of the Vicsek-BGK equation \eqref{nonlinearBGK} is a transport term describing movement with constant speed in the direction $\omega$. The right-hand-side, 
the 'collision operator' in the language of kinetic theory, describes a direction jump process, where post-collisional directions are sampled from the von Mises distribution.

The problem has been nondimensionalized, where the scaling of $F$ has been chosen to remove a (temperature) parameter in the exponent of the von Mises distribution, the
reference length is the length of the original torus, and the reference time is the mean time between directional jumps. The dimensionless parameter $\gamma>0$ is the Knudsen number, i.e. the ratio between the mean free path and the length of the torus or, equivalently, between the free flight time and the time a particle needs to traverse the torus. A second important dimensionless parameter is the scaled total number of particles or, equivalently, the scaled total mass $\mu$, which is (at least formally) conserved by the 
evolution:
\begin{align}
    \int_{\TT^d}\int_{\Sd} F(t,x,\omega) \ud{\omega} \ud{x} = \int_{\TT^d}\int_{\Sd} F^\circ(x,\omega) \ud{\omega} \ud{x} =: \mu \,,\qquad t\ge 0\,. 
\end{align}
The classical prototypes for individual based alignment models are the Cucker-Smale model \cite{cucker2007emergent}, where particle speeds can be arbitrary, and the Vicsek model 
\cite{vicsek1995novel}, where all
particles share the same speed and vary only their movement direction. In both models the velocity/direction of each particle is changed towards a weighted average over 
the other particles. A link between both types of models is provided by Cucker-Smale models with additional relaxation of the speed towards a constant value (see, e.g., \cite{Aceves2019}). Kinetic versions of these models can be seen as models (sometimes limits \cite{degond2008continuum}) for large particle ensembles. The Vicsek-BGK equation \eqref{nonlinearBGK} belongs to 
this group with the collision operator mimicking alignment under stochastic perturbations. Strongly related is the Vicsek-Fokker-Planck version of the model 
\cite{degond2013macroscopic,degond2015phase,frouvelle2012continuum,frouvelle2012dynamics}, where the collision operator is replaced by a Fokker-Planck type differential 
operator in $\omega$, which has the same equilibria as \eqref{nonlinearBGK}. Extensions of these models, used for the description of flocking dynamics, typically include 
long range attraction and short range repulsion via distance dependent interaction potentials \cite{cao2020asymptotic,carrillo2017review,carrillo2010particle,cucker2007emergent}.

\paragraph{The spatially homogeneous problem -- bifurcation:}
\label{sec:bifurcations}

For given total mass $\mu\ge 0$ the Vicsek-BGK equation \eqref{nonlinearBGK} admits spatially homogeneous equilibria of the form 
\begin{align} \label{eq:equilibria_homogeneous}
    F_\infty(x,\omega) = \mu M_{J_\infty}(\omega) \,,     
\end{align}
where $J_\infty\in \R^d$ must satisfy the consistency relation
\begin{align} \label{eq:consistency} 
    J_\infty= \mu\int_{\Sd} \omega M_{J_\infty}(\omega) \ud \omega = \mu \cc(|J_\infty|)\frac{J_\infty}{|J_\infty|} \,, \qquad \mathfrak{c}(r) := \frac{\int^\pi_{0} \cos\theta e^{r\cos\theta} \sin^{d-2}\theta \ud \theta}{\int^\pi_{0}  e^{r\cos\theta} \sin^{d-2}\theta \ud \theta} \,.
\end{align}
Obviously, the direction of $J_\infty$ is arbitrary, but its modulus $L=|J_\infty|$ has to satisfy 
\begin{equation}\label{eq:consistency_relation2}
   L=\mu \cc(L) \,.
\end{equation}
This equation has been studied in \cite{degond2013macroscopic,degond2015phase,frouvelle2012continuum,frouvelle2012dynamics}. For any $\mu\ge 0$ it has the trivial
solution $L=0$, which is the only solution for $\mu\le d$. At the critical mass $\mu=d$ a bifurcation occurs, and for $\mu>d$ there exists a continuous branch of unique nontrivial solutions $L_\mu\in(0,\mu)$ with $L_d=0$. At the bifurcation point we expect an exchange of stability from the trivial to the nontrivial steady state.
Motivated by this, we define the manifold of supposedly stable steady states:

\begin{definition}[Equilibrium set]
\label{def:Steady}
The equilibrium set $\me(\mu) \subset \mathbb R^{d}$ is defined by 
\begin{equation} \label{eq:SMu}
  \me(\mu) := \left\{ \begin{array}{ll} \{0\} \,,& 0\le\mu\le d \,,\\      \{ L_\mu\omega :\, \omega\in \Sd\} \,,& \mu>d \,,\end{array}\right.
\end{equation}
where $L_\mu>0$ denotes the nontrivial solution of \eqref{eq:consistency_relation2}. 
\end{definition}


For the spatially homogeneous problem our expectation is correct:
\begin{proposition}[Convergence to equilibria in the spatially homogeneous case]
\label{prop:space-homogeneous-stability}
The problem
\begin{align}\label{space-hom}
    \partial_t F = \rho_F M_{J_F} - F \,, \quad F(0,\cdot)=F^\circ(\cdot) \,,
\end{align}
with $F^\circ \in L_+^1(\Sd)$ has a global solution satisfying $ \lim_{t\to\infty} F(t,\omega) = \mu M_{J_\infty}(\omega)$ with
$$
   \mu= \int_{ \Sd} F^\circ(\omega) \ud\omega \,,\qquad J_\infty = \left\{ \begin{array}{ll} 0 \,,& \mbox{for } 0\le\mu\le d \mbox{ or } J_{F^\circ}=0\,,\\
                                                                       L_\mu \frac{J_{F^\circ}}{|J_{F^\circ}|} \,,& \mbox{for } \mu>d \mbox{ and } J_{F^\circ} \ne 0\,. \end{array}\right.
$$
\end{proposition}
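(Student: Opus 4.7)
The plan is to reduce the problem to an autonomous scalar ODE for the magnitude of the flux, analyze this ODE using the properties of $\mathfrak{c}$, and then reconstruct $F$ via Duhamel's formula.

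First, integrating \eqref{space-hom} over $\Sd$ gives $\partial_t \rho_F = 0$, so $\rho_F(t,\cdot) \equiv \mu$ for all $t \ge 0$. Combined with the local Lipschitz dependence of the right-hand side on $F \in L^1(\Sd)$ and the preservation of nonnegativity (the right-hand side at $F=0$ is nonnegative), this yields a global mild solution. The equation then reduces to the linear inhomogeneous ODE $\partial_t F = \mu M_{J_F} - F$. Multiplying by $\omega$ and integrating, and using the identity $\int_{\Sd} \omega M_J \ud\omega = \mathfrak{c}(|J|) J/|J|$ (which follows from the rotational symmetry of $M_J$ about the axis through $J$), I obtain the autonomous ODE
\begin{equation}
    \dot J = \left( \frac{\mu \mathfrak{c}(|J|)}{|J|} - 1 \right) J, \qquad J(0) = J_{F^\circ},
\end{equation}
which preserves the direction of $J$. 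Hence for $J_{F^\circ} \ne 0$ one has $J(t) = L(t) J_{F^\circ}/|J_{F^\circ}|$ with $L = |J|$ satisfying the scalar ODE $\dot L = g(L) := \mu \mathfrak{c}(L) - L$; for $J_{F^\circ} = 0$ uniqueness gives $J(t) \equiv 0$.

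The crux of the argument is the asymptotic analysis of this scalar ODE. Invoking the properties of $\mathfrak{c}$ already used in the discussion of the consistency relation \eqref{eq:consistency_relation2} — smoothness on $[0,\infty)$, strict monotonicity, and strict monotonicity of $L \mapsto L/\mathfrak{c}(L)$ with limits $d$ as $L \to 0^+$ and $\infty$ as $L \to \infty$ — one identifies the zero set and sign of $g$: for $\mu \le d$ the function $g$ is strictly negative on $(0,\infty)$, so $L(t) \to 0$; for $\mu > d$ one has $g>0$ on $(0,L_\mu)$ and $g<0$ on $(L_\mu,\infty)$, so $L(t) \to L_\mu$ whenever $L(0) > 0$. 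Combined with the preserved direction, this yields $J(t) \to J_\infty$ in exactly the stated form.

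Finally, Duhamel's formula for the now-linear equation for $F$ gives
\begin{equation}
    F(t,\omega) = e^{-t} F^\circ(\omega) + \mu \int_0^t e^{-(t-s)} M_{J(s)}(\omega) \ud s,
\end{equation}
and since $J \mapsto M_J$ is smooth (in $L^\infty(\Sd)$) and $J(s) \to J_\infty$, a standard splitting of the time integral combined with dominated convergence yields $F(t,\omega) \to \mu M_{J_\infty}(\omega)$ uniformly in $\omega$. The main obstacle is the scalar-ODE analysis, which rests on the nontrivial monotonicity of $L/\mathfrak{c}(L)$ inherited from the von Mises moments; the reduction and the Duhamel reconstruction are routine.
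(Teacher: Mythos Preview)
Your proof is correct and follows essentially the same route as the paper: mass conservation gives $\rho_F\equiv\mu$, the flux equation closes to an autonomous ODE preserving direction, the scalar ODE for $L=|J|$ is analyzed via the monotonicity of $\mathfrak{c}(L)/L$ (equivalently of $L/\mathfrak{c}(L)$), and $F$ is then reconstructed from $\rho_F$ and $J_F$. You supply slightly more detail than the paper on global well-posedness and on the Duhamel step, but the argument is the same.
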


\begin{proof}
By mass conservation we have $\rho_F(t) = \mu$, $t\ge 0$. The flux then satisfies the closed initial value problem
$$
   \frac{\ud}{\ud t} J_F = \left( \frac{\mu\cc(|J_F|)}{|J_F|} - 1\right)J_F \,,\qquad J_F(0) = J_{F^\circ}\,,
$$
which conserves the direction of $J_F$. Its solution has the form $J_F(t) = L(t)\frac{J_{F^\circ}}{|J_{F^\circ}|}$, where $L(t)$ solves
$$
   \frac{\ud L}{\ud t} = \mu\cc(L) - L \,,\qquad L(0) = |J_{F^\circ}| \,.
$$
The function $\cc$ has the properties
$$
   \cc(0)=0 \,,\quad \cc'(0) = \frac{1}{d} \,,\quad L\mapsto \frac{\cc(L)}{L} \mbox{ is strictly decreasing and tends to zero as } L\to\infty \,,
$$
which have been shown in \cite{frouvelle2012dynamics}. They imply that 
$$
   \lim_{t\to\infty} L(t) = \left\{ \begin{array}{ll} 0 \,,& 0\le\mu\le d  \mbox{ or } J_{F^\circ}=0\,,\\ L_\mu \,,& \mu > d \mbox{ and } J_{F^\circ} \ne 0\,. \end{array}\right.
$$
The result is now obvious by solving \eqref{space-hom} with given $\rho_F$ and $J_F$.
\end{proof}

The situation is simpler in models, where the equilibrium is a von Mises distribution with $J_F$ replaced by its normalized version. In this case the consistency relation corresponding to \eqref{eq:consistency} is always satisfied and there is only one stable equilibrium without any bifurcation behavior (see, e.g., \cite{kang2023dynamics}).

\paragraph{Challenges -- results -- perspectives:}
It is the main goal of this work to extend the result of Proposition \ref{prop:space-homogeneous-stability} to the spatially inhomogeneous model \eqref{nonlinearBGK}.
Since the transport operator is not dissipative, a hypocoercivity result \cite{dric2009hypocoercivity} is needed to prove that the combination of collisions and transport provide dissipation also in the position variable. Challenges are caused by three somewhat related properties of the model:
\begin{itemize} 
\item The bifurcation behavior discussed above, 
\item the lack of a sufficient number of conservation laws to explicitly determine the asymptotic equilibrium from the initial data, and finally 
\item the lack of an entropy. 
\end{itemize}
Since the established approaches
for proving hypocoercivity \cite{achleitner2016linear,achleitner2023hypocoercivity,dolbeault2015hypocoercivity,duan2011hypocoercivity,herau2006hypocoercivity,dric2009hypocoercivity} are based on entropy
dissipation, they cannot be applied here.
The lack of a complete set of conservation laws also has important implications for the derivation of macroscopic limits since, again, classical methods fail. The curious reader is referred to \cite{degond2008continuum}.

Our analysis starts with a rather straightforward approach for the linearized equation, employing a Laplace-Fourier transformed problem. This decouples the Fourier modes,
and therefore separates an explicitly treatable low dimensional problem showing the bifurcation behavior, from the higher order modes which are expected to decay. This can actually be proven by rather involved estimates and subsequent inverse Laplace transformation, which seems to be a new approach to showing hypocoercivity.  The decay result in $L^2$ can easily be raised to an $L^2$ based Sobolev space of high enough differentiability order to allow the 
control of the nonlinearities. Since the equilibrium is not known, an iterated procedure with linearizations around improved approximations of the equilibrium is necessary.
The final result is exponential convergence to equilibrium for initial data close enough to the set of stable equilibria.

The approach also produces global existence of strong solutions for initial data close to the set of stable equilibria. For large initial data finite time blow-up of strong solutions 
cannot be excluded. Therefore we complement our analysis by proving global existence of weak solutions for large initial data with bounded Boltzmann entropy. Although the
entropy is not deceasing in general, it can be shown to increase at an at most exponential rate.

Many open questions and venues of research remain:
\begin{itemize} 
\item The domain of attraction of the stable equilibria considered here is unclear. Global attraction might be conjectured, but so far we do not even have boundedness of large solutions.
\item Supposedly our results can be extended to different settings of the position space, where hypocoercivity approaches have been used successfully, such as whole space 
with \cite{dolbeault2015hypocoercivity} or without \cite{bouin2020hypocoercivity} confinement.
\item 
One can also consider other versions of the Vicsek model, in particular, the Vicsek-Fokker-Planck studied in \cite{degond2013macroscopic,degond2015phase,frouvelle2012continuum,frouvelle2012dynamics}. There exists already some preliminary results on the stability of equilibria in the space-inhomogenous case presented in \cite{frouvelle}, though the methodology there is very different from the one used here.
\item
Finally, there are other models for collective dynamics that present phase transitions, including models based on body attitude coordination and apolar alignment \cite{degond2020phase,degond2021kinetic,ginelli2010large}. For these models the question of stability in the space-inhomogeneous case is still a completely unexplored subject.
\end{itemize}

The rest of this work is structured as follows: Section~\ref{sec:main_results} contains precise formulations of the main results. The results on the linearized problem are proven 
in Section \ref{sec:lin}. Local stability for the nonlinear problem across the bifurcation is shown in Section \ref{sec:convergenceProof}. The global existence result of weak solutions
with bounded Boltzmann entropy is proven in Section \ref{sec:existence}. Finally there are two short appendices with technical results.

\section{Main results}
\label{sec:main_results}

\subsection*{Stability of the linearized equation}
\label{sec:results_linearized}

The linearization of \eqref{nonlinearBGK} around a spatially-homogeneous equilibrium $F_\infty=\mu  M_{\J}$ with $\J \in \mathcal{S}(\mu)$ is given by
\begin{equation}\label{eq:linearizedBGK}
    \begin{aligned} 
    &\partial_t f + \gamma \omega \cdot \nabla_x f =  \rho_f M_{\J} + \mu  J_f\cdot  \nabla_J M_{\J}- f \,, \\
    &f(t=0) =f^\circ, \qquad \mbox{ with }\intall f^\circ(x,\omega) \ud x \ud \omega =0 \,.
\end{aligned}
\end{equation}
The constraint on the initial datum $f^\circ$ of the perturbation $f$ is a consequence of the assumption that $\mu$ is the (conserved) total mass of the initial datum 
$F^\circ$ of $F$. Mass conservation carries over to the linearized problem, and therefore we expect
\begin{align}
    \intall f(t,x,\omega) \ud x \ud \omega= 0 \,,\qquad t\ge 0 \,.
\end{align}
This can be written as $\overline{ \rho_f}(t) = 0$, $t\ge 0$, using the spatial average
\begin{align} \label{eq:def_bracket_operator}
    \overline G(t,\omega) := \int_{\T^d}  G(t,x,\omega) \ud{x} \,. 
\end{align}
Contrary to the nonlinear problem, the equilibrium for the linearized equation can be determined from the initial data by additional conservation laws.
The spatial average of the flux satisfies the closed equation
\begin{equation}\label{Jf-ODE}
    \frac{\ud}{\ud t}\overline{J_f} = \left( \mu \int_\Sd \omega\otimes\nabla_J M_{\J}\ud\omega - \Id\right) \overline{J_f} \,.
\end{equation}
We shall prove (Lemma \ref{lem:CJ}) that the coefficient matrix is symmetric and that for $\mu<d$ (i.e. $\J=0$) it has negative eigenvalues and, thus, 
$\overline{J_f}(t)\to 0$ as $t\to\infty$. On the other hand for $\mu>d$ the coefficient matrix possesses
the $(d-1)$-dimensional nullspace $\J^\bot$ and the remaining eigenvalue with eigenvector $\J$ is negative for $\mu$ close to $d$. 

When the solution of \eqref{eq:linearizedBGK} converges to a steady state, we expect hypocoercivity, i.e. spatial homogeneity of the steady state and therefore
$$
   \rho_f(t,x) \approx \overline{\rho_f}(t) = 0 \,,\quad J_f(x,t) \approx \overline{J_f}(t) \,,\qquad\mbox{as } t\to\infty \,.
$$
As a consequence, the long time behaviour of the solution of \eqref{Jf-ODE} completely determines the equilibrium. In particular, we define
\begin{equation}\label{f_infty}
   f_\infty(x,\omega) = \left\{ \begin{array}{ll} 0 \,, & \mbox{for } \mu \le d \,,\\ 
                                                                     \mu \left(P_\J^\bot \overline{J_{f^\circ}}\right)\cdot \nabla_J M_\J(\omega) \,,& \mbox{for } \mu>d \,,
                                            \end{array}\right.
\end{equation}
with the orthogonal projection
\begin{align} \label{eq:operatorProjection}
    P^\perp_\J:= \Id -\frac{\J}{|\J|}\otimes \frac{\J}{|\J|} \,,
\end{align} 
to the nullspace of the coefficient matrix in \eqref{Jf-ODE}. 

We remark that for $\mu\leq d$, the linearized equation reads
\begin{equation}\label{eq:linearizedBGKstable}
	\begin{aligned} 
		&\partial_t f + \gamma \omega \cdot \nabla_x f =  \rho_f M_{0} + \mu  J_f\cdot  \omega M_{0}- f \,, \\
		&f(t=0) =f^\circ, \qquad \mbox{ with }\intall f^\circ(x,\omega) \ud x \ud \omega =0 \,.
	\end{aligned}
\end{equation}
In this case, any function $f\in L^2(\mathbb{T}^d\times \Sd)$ can decomposed orthogonally into
\begin{align*}
	f = \rho M_0 + d J_f\cdot \omega  M_0 + f^\perp,
\end{align*}
and we obtain
\begin{align} \label{eq:L2stab}
	\frac12 \partial_t \|f\|^2_{L^2(\mathbb{T}^d\times \Sd)} = -\|f^\perp\|^2_{L^2(\mathbb{T}^d\times \Sd)} - (d-\mu) \int_{\mathbb{T}^d} |J(x)|^2 \ud{x} \leq 0, 
\end{align}
that the $L^2$ norm is monotone decreasing for $\mu \leq d$.

\begin{theorem}[Exponential stability for the linearized equation]\label{th:spec-stab}
There exist $\gamma_{min}, \kappa>0$, such that for $\gamma\ge \gamma_{min}$, $0<\mu\le d+\kappa$, $\mu\ne d$, $m\geq 0$,  there exist $\lambda, C>0$,
such that for each $\J \in \mathcal S(\mu)$ the solution $f$ of \eqref{eq:linearizedBGK} satisfies
    \begin{align} \label{eq:decay_linear}
        \|f(t,\cdot,\cdot)- f_\infty\|_{H^m_x(\T^d \times \Sd)}\leq C e^{-\lambda t}\|f^\circ\|_{H^m_x(\T^d \times \Sd)}  \,,
    \end{align}
where the space $H^m_x$ is defined by
$$
 \|g\|_{H^m_x(\T^d \times \Sd)} := \sum_{|\alpha|\leq m}  \|D^\alpha_x g\|_{L^2(\TT^d \times \Sp^{d-1})}  \,.
$$
The decay rate satisfies $\lambda = O(|\mu-d|)$ as $\mu\to d$.
\end{theorem}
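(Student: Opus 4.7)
The plan is to Fourier-transform in $x$, decoupling \eqref{eq:linearizedBGK} into separate evolutions $\partial_t \hat f(t,k,\cdot)=L_k\hat f(t,k,\cdot)$ on $L^2(\Sd)$, one for each $k\in\Z^d$, where
\begin{equation*}
L_k g := \rho_g M_{\J}+\mu J_g\cdot\nabla_J M_{\J}-g-\mathrm{i}\gamma(k\cdot\omega)g.
\end{equation*}
By Plancherel the $H^m_x$ norm is equivalent to a weighted sum $\bigl(\sum_k (1+|k|^2)^m\|\hat f(t,k,\cdot)\|_{L^2(\Sd)}^2\bigr)^{1/2}$, so it suffices to produce a decay rate $\lambda>0$ and constant $C>0$ for the semigroup $e^{tL_k}$ that are uniform in $k\in\Z^d$; the parameter $m$ then enters only through the Fourier weight on the initial datum.

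I would handle the zero mode $k=0$ first by direct spectral analysis. The constraint on the initial datum forces $\overline{\rho_f}\equiv 0$, and integrating the equation against $\omega$ yields the closed linear ODE \eqref{Jf-ODE} for $\overline{J_f}$. Using the forthcoming Lemma \ref{lem:CJ} on the spectrum of the coefficient matrix $C(\J):=\mu\int_{\Sd}\omega\otimes\nabla_J M_{\J}\,\mathrm{d}\omega-\mathrm{Id}$, which is symmetric and either strictly negative (when $\mu<d$) or possesses the kernel $\J^\perp$ together with a simple eigenvalue of order $-(\mu-d)$ in direction $\J$ (when $d<\mu\le d+\kappa$), one obtains exponential convergence $\overline{J_f}(t)\to P^\perp_\J\overline{J_{f^\circ}}$ at rate $O(|\mu-d|)$. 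Substituting this back, the remainder $g:=\hat f(t,0,\cdot)-\mu\overline{J_f}(t)\cdot\nabla_J M_{\J}$ satisfies $\partial_t g=-g$, giving convergence of $\hat f(t,0,\cdot)$ to $f_\infty$ as defined in \eqref{f_infty}. This mode dictates the final slow rate $\lambda=O(|\mu-d|)$ stated in the theorem.

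For $k\ne 0$ the strategy is a Laplace transform in time combined with large-$\gamma$ averaging on the sphere. For $\mathrm{Re}\,s$ large, the resolvent equation $(s+1+\mathrm{i}\gamma k\cdot\omega)\hat f=\hat f^\circ+\rho_{\hat f}M_{\J}+\mu J_{\hat f}\cdot\nabla_J M_{\J}$ is solved pointwise in $\omega$, and testing against $1$ and $\omega$ yields a closed $(d+1)\times(d+1)$ linear system for $(\rho_{\hat f},J_{\hat f})$ whose coefficients are sphere integrals of $(s+1+\mathrm{i}\gamma k\cdot\omega)^{-1}$ against smooth weights. The quantitative heart of the argument is to show that for $\gamma\ge\gamma_{\min}$ large and any $k\ne 0$ these coefficients are small uniformly in $s$ in a strip $\mathrm{Re}\,s\ge-\lambda$, so the system matrix is a controlled perturbation of $\mathrm{Id}$ and is invertible with a uniform bound; this gives an explicit analytic continuation of $(s-L_k)^{-1}$ into the strip with polynomial $|s|$-bounds. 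A Bromwich contour shift (after one integration by parts in $s$ to render the integral absolutely convergent) then yields
\begin{equation*}
\hat f(t,k,\cdot)=\frac{1}{2\pi\mathrm{i}}\int_{-\lambda-\mathrm{i}\infty}^{-\lambda+\mathrm{i}\infty}e^{st}(s-L_k)^{-1}\hat f^\circ(k,\cdot)\,\mathrm{d}s,
\end{equation*}
whence $\|\hat f(t,k,\cdot)\|_{L^2(\Sd)}\le Ce^{-\lambda t}\|\hat f^\circ(k,\cdot)\|_{L^2(\Sd)}$ with $C,\lambda$ independent of $k\ne 0$.

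The main obstacle I anticipate is the uniform-in-$k$ resolvent bound when $|k|=1$, the regime in which the averaging effect on the sphere is weakest. Here the hypothesis $\gamma\ge\gamma_{\min}$ is crucial: it ensures $\gamma|k|\ge\gamma_{\min}\gg 1$ for every nonzero integer $k$, so the relevant sphere integrals decay and the $(d+1)\times(d+1)$ moment matrix remains a small perturbation of identity. Excluding embedded eigenvalues of $L_k$ in the shifted strip $\mathrm{Re}\,s\ge-\lambda$ can then be obtained by a homotopy/continuity argument from the large-$|s|$ regime where invertibility is immediate. Lifting the mode-wise decay to $H^m_x$ is straightforward because $L_k$ involves no $\omega$-derivatives and the transport contribution $\mathrm{i}\gamma(k\cdot\omega)$ is skew-symmetric on $L^2(\Sd)$, so summing the mode estimates against $(1+|k|^2)^m$ and applying Plancherel completes the proof.
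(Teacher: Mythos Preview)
Your architecture matches the paper's: Fourier decoupling in $x$, a separate explicit treatment of $k=0$ via the ODE \eqref{Jf-ODE} and Lemma~\ref{lem:CJ}, and for $k\neq 0$ a Laplace transform reducing to a closed $(d{+}1)$-dimensional moment system. The zero-mode analysis is correct and does fix the slow rate $\lambda=O(|\mu-d|)$.

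For $k\neq 0$ the outline is right but two points are glossed over. First, your claim that the moment coefficients are \emph{small} for large $\gamma$ is not what the paper proves, and is not what it needs: the paper shows invertibility of $\Id-\mu A_{\J}$ for \emph{every} $\gamma>0$ (Lemma~\ref{lem:stabnontriv}) via a quadratic-form lower bound using only that a positive-measure portion of $\Sd$ sits away from the resonance $\Im z+k\cdot\omega=0$, and then controls the scalar coefficient $1-a_{\J}-\mu\,\bar b_{\J}^{tr}(\Id-\mu A_{\J})^{-1}b_{\J}$ through the sharp bounds of Lemma~\ref{lem:est-cj} (e.g.\ $|c_1|\le \tfrac{1}{2\sqrt d}+\tfrac{1}{\gamma}$, which is \emph{not} small). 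Your smallness heuristic happens to be defensible, but it would still require the averaging estimates you do not mention.

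Second, and more seriously, the inverse Laplace step as you sketch it does not yield a constant independent of $k$. On the critical segment $|\Im s|\le 2\gamma|k|$ the resolvent has no $|s|$-decay and the segment has length $\sim|k|$; a single integration by parts in $s$ only trades this for a factor $1/t$ together with an integral that still feels the length of the segment unless one inputs some $|k|$-decay. The paper supplies exactly this missing ingredient in Lemma~\ref{lem:FL-est}: a Cauchy--Schwarz plus $L^2(\Sd)$ averaging bound on $(1+z+ik\cdot\omega)^{-1}$ gives $|\tilde\rho_k|+|\tilde J_k|\le C\langle k\rangle^{-1/5}\|\hat f^\circ_k\|_{L^2(\Sd)}$ for $|\Im z|\le 2|k|$, and this gain beats the $\log|k|$ arising from $\int_{|y|\le 2|k|}|1-\delta+i(y+k\cdot\omega)|^{-1}\,\mathrm{d}y$, delivering the uniform pointwise estimate \eqref{hatf-est}. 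Whether you argue by the paper's direct $y$-splitting or by your integration by parts, you will need precisely this averaging-type decay of the source terms $r_{\rho,k},r_{J,k}$; mere uniform boundedness or analyticity of the resolvent does not close the argument.
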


\begin{remark}
    In fact, the $L^2$ stability estimate~\eqref{eq:L2stab} could be used to extend this result to arbitrary $\gamma>0$, however without constructive bounds on $\lambda>0$.
\end{remark}

\subsection*{Exponential stability of equilibria for the nonlinear equation}
\label{sec:convergence}

We start with local existence of strong solutions of the nonlinear problem and provide a blow-up criterion. The proof follows standard arguments and will only be outlined in Section~\ref{sec:convergenceProof}. 
\begin{lemma}[Existence] \label{lem:existence}
    Let $m> d/2$ and $F^\circ\in H^m_x(\TT^{d}\times \Sp^{d-1})$. Then there exists a time  $T_*>0$ such that \eqref{nonlinearBGK} has a strong solution $F\in C^1([0,T_*);H^m_x(\TT^{d}\times \Sp^{d-1}))$ and either
    \begin{align}
        T_* = \infty \,, \qquad  \text{or}\qquad
        \limsup_{t\rightarrow T_*} \|\rho(t)\|_{L^\infty(\TT^d)} = \infty \,. 
    \end{align}
    Furthermore, the total mass of $F^\circ$ is conserved by the evolution:
    \begin{align} \label{eq:totalmass} 
        \int_{\TT^d \times \Sp^{d-1}} F(t,x,\omega) \ud{\omega} \ud{x} = \int_{\TT^d \times \Sp^{d-1}} F^\circ(x,\omega) \ud{\omega} \ud{x} =: \mu \,,\qquad 0\le t < T_*\,.
    \end{align}
\end{lemma}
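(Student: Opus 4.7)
The plan is a standard semigroup/fixed-point argument exploiting that $m>d/2$ yields the Sobolev embedding $H^m(\T^d)\hookrightarrow L^\infty(\T^d)$. I would rewrite \eqref{nonlinearBGK} in mild form using the damped transport semigroup $S(t)G(x,\omega) := e^{-t}G(x-\gamma t\omega,\omega)$, which is an $L^2_\omega H^m_x$-isometry modulo the decay factor:
\[
F(t) = S(t)F^\circ + \int_0^t S(t-s)\bigl[\rho_{F(s)}\, M_{J_{F(s)}}\bigr]\,\mathrm{d}s.
\]
Local existence then follows from Banach fixed point in a small ball of $C([0,T];H^m_x(\T^d\times\Sp^{d-1}))$, once one shows that the nonlinear map $F\mapsto \rho_F M_{J_F}$ is locally Lipschitz. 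Since $\rho_F,J_F$ are linear bounded maps into $H^m(\T^d)\hookrightarrow L^\infty(\T^d)$, and $J\mapsto M_J(\omega)$ is smooth with all derivatives bounded uniformly in $\omega$ on compact $J$-sets, a standard Moser/tame product estimate gives $\|\rho_F M_{J_F}\|_{H^m_x L^2_\omega}\le C(\|F\|_{H^m_x L^2_\omega})\,\|F\|_{H^m_x L^2_\omega}$ with locally bounded $C$. This yields $T_*>0$ and a unique strong solution in $C^1([0,T_*);H^m_x)$ after checking that the damping in $S$ makes Duhamel differentiable in time.

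For the blow-up criterion I would argue by contradiction: assume $T_*<\infty$ and $\|\rho(t)\|_{L^\infty_x}\le R$ for $t<T_*$. Apply $D^\alpha_x$ for $|\alpha|\le m$, test against $D^\alpha_x F$ in $L^2_{x,\omega}$, and note that the transport term is skew-adjoint on $\T^d$ and disappears. This produces
\[
\tfrac12\tfrac{\mathrm{d}}{\mathrm{d}t}\|D^\alpha F\|_{L^2}^2 + \|D^\alpha F\|_{L^2}^2 = \bigl(D^\alpha F,\,D^\alpha(\rho_F M_{J_F})\bigr)_{L^2}.
\]
Positivity propagation (from the mild formulation: $F^\circ\ge 0$ and nonnegative source give $F\ge 0$, and the lemma is applied in this setting) yields $|J_F|\le\rho_F\le R$, so $M_{J_F}$ and all its $J$-derivatives are uniformly bounded on $\Sp^{d-1}$. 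The tame estimate
\[
\|\rho_F M_{J_F}\|_{H^m_x L^2_\omega}\le C(R)\bigl(1+\|F\|_{H^m_x L^2_\omega}\bigr)
\]
together with Grönwall gives $\|F(t)\|_{H^m_x L^2_\omega}\le C(R,T_*)\|F^\circ\|_{H^m_x L^2_\omega}$ for $t<T_*$. Reapplying the local existence result with initial datum $F(T_*-\delta)$ therefore extends the solution strictly beyond $T_*$, a contradiction.

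Mass conservation is immediate from the divergence structure: integrating \eqref{nonlinearBGK} over $\T^d\times\Sp^{d-1}$ kills the transport term by periodicity, and by the definition \eqref{def:MJ} one has $\int_{\Sp^{d-1}} M_{J_F}\,\mathrm{d}\omega=1$, so the collision contribution $\int \rho_F M_{J_F}\,\mathrm{d}\omega\,\mathrm{d}x - \int F\,\mathrm{d}\omega\,\mathrm{d}x$ vanishes identically.

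The main obstacle, and the only point not purely cosmetic, is the Moser-type estimate for the composed nonlinearity $x\mapsto M_{J_F(x)}(\omega)$: since $M_J$ depends smoothly but nonlinearly on the first moment $J$, one must control $H^m_x$-norms of a nonlinear superposition in terms of $\|J_F\|_{L^\infty_x}$ (provided by the blow-up hypothesis and positivity) and a single top-order factor $\|J_F\|_{H^m_x}\lesssim\|F\|_{H^m_x L^2_\omega}$. This is a standard chain-rule/Faà di Bruno argument, but it is the step that has to be executed carefully to ensure that the Grönwall-type inequality closes with a constant depending only on $R$.
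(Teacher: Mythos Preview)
Your proposal is correct and follows essentially the same approach as the paper, which also proceeds via a Duhamel formulation, Picard iteration, and the Banach algebra property of $H^m(\T^d)$ for $m>d/2$; the paper's own write-up is in fact much terser (``standard arguments''), so you have supplied more detail than the authors do. The only cosmetic difference is that the paper linearizes around a fixed equilibrium $F_{\mu,J_2}$ and uses the semigroup generated by $\La_{\mu,J_2}$, whereas you use the free damped transport semigroup and treat the full gain term as the nonlinearity---both lead to the same conclusion.
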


The following theorem is the main result of the paper and includes the global well-posedness and local stability of von Mises equilibria, extending the spectral stability 
result Theorem \ref{th:spec-stab}. 

\begin{theorem}[Long time behaviour - bifurcation for the Vicsek-BGK equation \eqref{nonlinearBGK}]
\label{th:convergence}
Let  $m> d/2$ and $\gamma,\mu$ as in Theorem~\ref{th:spec-stab}.
Then there exists $\eps_0>0$ such that for any $J_1 \in \mathcal{S}(\mu)$ and any non-negative initial datum $F^\circ\in H^m_x(\TT^{d}\times \Sp^{d-1})$
with $\mu=\int_{\TT^d \times \Sd} F^\circ(x, \omega)  \ud{\omega}\uud{x}$ and 
\begin{align} \label{eq:lem_assump2}
        \|F^\circ-F_{\mu,J_1}\|_{H^m_x(\TT^{d}\times \Sp^{d-1})}< \eps_0 \,, 
    \end{align}
there exists a global solution $F=F(t)$ of~\eqref{nonlinearBGK} with initial datum $F^\circ$. Moreover, there exist  $\hat\lambda,C>0$ and $J_\infty \in \me(\mu)$  such that
     \begin{align} \label{eq:conv_equilibrium}
    \|F(t) - F_{\mu,J_\infty}\|_{H^m_x(\TT^{d}\times\Sp^{d-1})}\leq C e^{-\hat\lambda t}\|F^\circ-F_{\mu,J_1}\|_{H^m_x(\TT^{d}\times \Sp^{d-1})} \,.
    \end{align}
\end{theorem}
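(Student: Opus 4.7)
The plan is to combine Theorem~\ref{th:spec-stab} with an iterative re-centering of the reference equilibrium on $\me(\mu)$ to absorb the non-decaying tangential mode. By Lemma~\ref{lem:existence} there is a local $H^m_x$-solution, and Sobolev embedding ($m>d/2$) gives $\|\rho(t)\|_{L^\infty}\le C\|F(t)\|_{H^m_x}$, so an a priori $H^m_x$ bound will preclude blow-up. Writing $F(t) = F_{\mu,J} + g(t)$ for a reference $J\in\me(\mu)$ and expanding the collision term of~\eqref{nonlinearBGK} around $J$, I would obtain
\begin{equation*}
\partial_t g + \gamma\omega\cdot\nabla_x g = L_J g + N[g],\qquad L_J g := \rho_g M_J + \mu J_g\cdot\nabla_J M_J - g,
\end{equation*}
where $L_J$ is exactly the operator of the linearized equation~\eqref{eq:linearizedBGK} and $N[g]$ collects the quadratic-and-higher remainder. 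Since $m>d/2$ makes $H^m_x$ a Banach algebra in $x$ and since $M_J$, $\nabla_J M_J$, $\nabla_J^2 M_J$, \ldots\ are smooth on bounded subsets of $\R^d$, one readily checks $\|N[g]\|_{H^m_x}\le C\|g\|_{H^m_x}^2$ on small balls.

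Next I would run a Duhamel bootstrap on a fixed time window $[0,T]$, choosing $T$ so that $Ce^{-\lambda T}\le 1/4$ in the decay estimate of Theorem~\ref{th:spec-stab}. A standard continuity argument then gives $\|g(t)\|_{H^m_x}\le 2\|g(0)\|_{H^m_x}$ on $[0,T]$ and, more precisely,
\begin{equation*}
\|g(T) - f_\infty\|_{H^m_x}\le \tfrac{1}{4}\|g(0)\|_{H^m_x} + C\eps_0\|g(0)\|_{H^m_x},
\end{equation*}
with $f_\infty = \mu(P_J^\perp\,\overline{J_{g(0)}})\cdot\nabla_J M_J$ as in~\eqref{f_infty}. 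For $\mu\le d$, $f_\infty\equiv 0$ and $\me(\mu)=\{0\}$, so iterating immediately yields $\|F(kT)-F_{\mu,0}\|_{H^m_x}\le 2^{-k}\|f^\circ\|_{H^m_x}$ and the theorem follows. For $\mu>d$ I would re-centre after each window: set $J_{k+1}$ to be the point of $\me(\mu)$ obtained by projecting $J_k + P_{J_k}^\perp \overline{J_F}(kT)$ back onto the sphere, equivalently $J_{k+1} = L_\mu\,\overline{J_F}(kT)/|\overline{J_F}(kT)|$. The Taylor expansion
\begin{equation*}
F_{\mu,J_{k+1}} - F_{\mu,J_k} = \mu\nabla_J M_{J_k}\cdot(J_{k+1}-J_k) + O(|J_{k+1}-J_k|^2)
\end{equation*}
shows that this update cancels the non-decaying contribution $f_\infty$ up to a quadratic error, producing
\begin{equation*}
\|F((k{+}1)T) - F_{\mu,J_{k+1}}\|_{H^m_x}\le q\|F(kT) - F_{\mu,J_k}\|_{H^m_x},\qquad |J_{k+1}-J_k|\le C\|F(kT)-F_{\mu,J_k}\|_{H^m_x},
\end{equation*}
for some $q<1$ once $\eps_0$ is small. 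Geometric summation then produces a Cauchy sequence $J_k\to J_\infty\in\me(\mu)$ together with the bound~\eqref{eq:conv_equilibrium} at rate $\hat\lambda = -(\log q)/T$, and the uniform $H^m_x$ control excludes blow-up via Lemma~\ref{lem:existence}.

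The main obstacle is the geometric closure of the iteration: one must verify that $\delta J \mapsto \mu\nabla_J M_J\cdot\delta J$ restricted to $J^\perp$ parametrizes the tangent space of $\me(\mu)$ at $J$ and that its image coincides exactly with the non-decaying subspace of $e^{tL_J}$ identified in~\eqref{f_infty}, uniformly for $J\in\me(\mu)$. This transverse parametrization is what allows the re-centering to contract the perturbation by a fixed factor and yields the Lipschitz bound $|J_{k+1}-J_k|\lesssim \|F(kT)-F_{\mu,J_k}\|_{H^m_x}$. Combined with the smallness threshold $\eps_0$, it ensures that every iterate stays inside the validity range of Theorem~\ref{th:spec-stab} and that the sequence $\{J_k\}$ accumulates to a well-defined $J_\infty\in\me(\mu)$.
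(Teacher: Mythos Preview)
Your proposal is correct and follows essentially the same strategy as the paper: linearize around a reference equilibrium in $\me(\mu)$, use Theorem~\ref{th:spec-stab} together with a Duhamel bootstrap on a fixed window, re-centre the reference along $\me(\mu)$ to absorb the tangential non-decaying mode, and iterate to produce a Cauchy sequence $J_k\to J_\infty$ with geometric contraction of $\|F(kT)-F_{\mu,J_k}\|_{H^m_x}$.

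The one tactical difference is the order of operations. You run the window with reference $J_k$, obtain $g(T)\approx f_\infty[g(0)]$, and then choose $J_{k+1}$ so that the Taylor expansion $F_{\mu,J_{k+1}}-F_{\mu,J_k}\approx \mu\nabla_J M_{J_k}\cdot(J_{k+1}-J_k)$ cancels $f_\infty$; this is why you flag the tangent-space parametrization as the ``main obstacle''. The paper instead re-centres \emph{before} each window: it sets $J_{k+1}=L_\mu\,\overline{J_F}((k{-}1)T)/|\overline{J_F}((k{-}1)T)|$, so that the new perturbation $f_k=F-F_{\mu,J_{k+1}}$ satisfies $P^\perp_{J_{k+1}}\overline{J_{f_k}}((k{-}1)T)=0$ \emph{exactly}, and hence $f_\infty=0$ in Theorem~\ref{th:spec-stab}. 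This lets the paper bootstrap the pointwise-in-time decay $\|f_k(t)\|\le 2C_1\eps_k e^{-\lambda(t-(k-1)T)}$ on the whole window (rather than just $\|g(t)\|\le 2\|g(0)\|$), and avoids the explicit Taylor-cancellation step; the ``geometric closure'' you worry about reduces to the elementary Lipschitz bound $\|F_{\mu,J_{k+1}}-F_{\mu,J_k}\|_{H^m_x}\le C|J_{k+1}-J_k|$ together with $|J_{k+1}-J_k|\le 2|\overline{J_F}((k{-}1)T)-J_k|$. Both routes close with $\eps_0\sim\lambda\sim|\mu-d|$ and $\hat\lambda\sim\lambda/\log C$.
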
 

\begin{remark} a) The proof, given in  Section~\ref{sec:convergenceProof}, shows that the result degenerates as $\mu\to d$, whence both 
$\eps_0>0$ and $\hat\lambda$ tend to zero, just as the exponential decay rate $\lambda$ in Theorem \ref{th:spec-stab}. \\
b) By the lack of an appropriate conservation law there is no formula for the equilibrium flux $J_\infty$. The proof shows that it satisfies
$$
    |J_1-J_\infty| \le C \|F^\circ-F_{\mu,J_1}\|_{H^m_x(\TT^{d}\times \Sp^{d-1})}  \,.
$$
\end{remark}

\subsection*{Global existence of weak solutions}
\label{sec:existenceGlobal}
Our stability result close to equilibrium is complemented by a global-in-time well-posedness for initial data with finite entropy. 
\begin{theorem}
    \label{th:existence_solution}
    Let $d\geq 2$, $0\le F^\circ \in L^1(\TT^d \times \Sd)$, and 
    \begin{align}
        \int_{\TT^d \times \Sd} F^\circ |\log F^\circ| \ud{x} \ud{\omega} = E_0 < \infty \,.
    \end{align}
 Then there exists a nonnegative mild solution $F\in C([0,\infty), L^1(\TT^d\times \Sd))$ of the Vicsek-BGK problem~\eqref{nonlinearBGK}. Moreover, the total mass of $F$ 
 is constant in time and the entropy grows at most exponentially, i.e. there exist $c,C>0$ such that
    \begin{align} \label{eq:entropybound} 
        \int_{\TT^d \times \Sd} F(t,x,v) |\log F(t,x,v)| \ud{x} \ud{v} \leq C (1+ e^{ct})\,.  
    \end{align}
\end{theorem}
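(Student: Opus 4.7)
The plan is to construct a global mild solution as the limit of solutions to a truncated equation, using the Duhamel formulation
\[
    F(t,x,\omega) = e^{-t} F^\circ(x-\gamma t\omega,\omega) + \int_0^t e^{-(t-s)}\rho_F(s, y_s)\,M_{J_F(s, y_s)}(\omega)\, \ud s, \qquad y_s := x - \gamma(t-s)\omega,
\]
and then extracting a limit via the weak-$L^1$ compactness produced by the entropy bound.

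\emph{Approximants and a priori bounds.} For $R>0$ I replace $J_F$ inside $M_{J_F}$ by its truncation $J^R_F := \min(|J_F|, R)\,J_F/|J_F|$, and mollify the initial datum to $F^\circ_R \in L^1 \cap L^\infty$ with $F^\circ_R \to F^\circ$ in $L^1$ and $\int F^\circ_R |\log F^\circ_R|\, \ud x\ud\omega \le E_0 + 1$. The truncated Duhamel map is a contraction on $C([0,T]; L^1 \cap L^\infty)$ for arbitrary $T>0$, because $\|M_{J^R}\|_{L^\infty(\Sd)} \le |\Sd|^{-1} e^R$ and $J\mapsto M_J$ is Lipschitz on $\{|J| \le R\}$; this gives a global nonnegative $F^R$. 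Mass conservation is immediate. For the entropy, differentiating $\int F^R \log F^R$ and using the pointwise identity $(G-F)\log F = (G-F)\log G - (G-F)(\log G - \log F)$ with $G := \rho_{F^R} M_{J^R_{F^R}}$, together with $\int(G-F)\,\ud\omega = 0$, gives the exact relation
\[
    \frac{\ud}{\ud t}\int F^R \log F^R\, \ud x\ud\omega + D(F^R) = \int |J_{F^R}|\bigl(\rho_{F^R}\,\cc(|J^R_{F^R}|) - |J^R_{F^R}|\bigr)\, \ud x,
\]
where $D(F^R) := \int (G - F^R)(\log G - \log F^R)\, \ud x \ud\omega \ge 0$ is the entropy dissipation. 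Using the identity $(\rho_F\,\cc(|J^R_F|) - |J^R_F|)\hat J_F = \int \omega(G - F)\, \ud\omega$ together with the pointwise Csisz\'ar--Kullback estimate $|\!\int \omega(G - F)\,\ud\omega|\le \sqrt{\rho_{F^R} D_x}$ and Young's inequality, I would absorb the high-moment part of the production into $D(F^R)$ and obtain a differential inequality $\tfrac{\ud}{\ud t} E^R \le c\, E^R + C$ for $E^R := \int F^R |\log F^R|\, \ud x \ud\omega$, with $c, C$ independent of $R$. Gronwall then yields the uniform bound $E^R(t) \le C(1 + e^{ct})$.

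\emph{Compactness and passage to the limit.} The uniform entropy bound provides equi-integrability of $\{F^R\}$ by de la Vall\'ee--Poussin, so along a subsequence $F^R \rightharpoonup F$ weakly in $L^1(\TT^d \times \Sd)$ uniformly on compact time intervals (Dunford--Pettis). To pass to the limit in the nonlinear term $\rho_{F^R} M_{J^R_{F^R}}$, I would invoke velocity-averaging lemmas applied to the transport equation $\partial_t F^R + \gamma\omega\cdot\nabla_x F^R = \rho_{F^R} M_{J^R_{F^R}} - F^R$, whose right-hand side is uniformly $L^1$-bounded; this yields strong $L^1_\loc$ compactness of $\rho_{F^R}$ and $J_{F^R}$. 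Since $|J_{F^R}|\le \rho_{F^R}$ is uniformly $L^\infty_t L^1_x$, the truncation $J^R_{F^R} - J_{F^R}$ tends to $0$ in measure as $R\to\infty$, and continuity of $J\mapsto M_J$ together with equi-integrability then gives $\rho_{F^R} M_{J^R_{F^R}} \to \rho_F M_{J_F}$ in $L^1_\loc$. Passing to the limit in the Duhamel identity shows that $F$ is a mild solution, and the mass and entropy bounds transfer to $F$ by lower semi-continuity.

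\emph{Main obstacle.} The decisive step is the entropy estimate. Since the Vicsek--BGK operator conserves only mass, the production term is genuinely present and the naive pointwise bound $(\rho_F\,\cc(|J_F|) - |J_F|)|J_F| \le \rho_F^2$ is not controlled by the entropy alone---$\int \rho_F^2$ can blow up in finite time from $L\log L$ data---so getting only exponential rather than finite-time growth of $E$ requires bringing in the dissipation $D(F)\ge 0$ through the identity $(\rho_F\,\cc(|J_F|) - |J_F|)\hat J_F = \int \omega(G - F)\,\ud\omega$, a Csisz\'ar--Kullback estimate, and the monotonicity $\cc(r)\le r/d$. A secondary, more standard issue is obtaining enough strong compactness of $J_{F^R}$ to pass to the limit in $M_{J^R_{F^R}}$, which is handled by velocity averaging.
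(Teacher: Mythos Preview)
Your overall architecture---truncate the flux, prove global existence for the regularized problem, derive a uniform entropy bound, then pass to the limit via Dunford--Pettis and velocity averaging---matches the paper exactly. The gap is in the entropy estimate, which is also the step you flag as the main obstacle.

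Your proposed route through the dissipation $D(F^R)$ and a Csisz\'ar--Kullback inequality does not close independently of $R$. Carrying out your identity (with the correct bookkeeping: the production term is $\int |J^R_{F^R}|(\rho_{F^R}\cc(|J^R_{F^R}|)-|J_{F^R}|)\,\ud x$, not quite what you wrote) and applying $|\int\omega(G-F)\,\ud\omega|\le\sqrt{\rho_{F^R}D_x}$ followed by Young's inequality leaves a residual term $\tfrac12\int |J^R_{F^R}|^2\rho_{F^R}\,\ud x$. Bounding $|J^R_{F^R}|\le R$ gives an $R$-dependent constant; bounding $|J^R_{F^R}|\le\rho_{F^R}$ gives $\int\rho_{F^R}^3$, which is not controlled by $\int F^R|\log F^R|$. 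The monotonicity $\cc(r)\le r/d$ does not help: it still yields a cubic moment of $\rho_{F^R}$ on the set where the integrand is positive. So the differential inequality $\tfrac{\ud}{\ud t}E^R\le cE^R+C$ with $c,C$ independent of $R$ is asserted but not actually established.

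The paper avoids the dissipation entirely. It writes
\[
\frac{\ud}{\ud t}\mathcal E[F_\eps]\le -\mathcal E[F_\eps]+\int\rho_{F_\eps}M_{J^\eps_{F_\eps}}\log(\rho_{F_\eps}M_{J^\eps_{F_\eps}})\,\ud x\,\ud\omega + C
\]
by dropping the relative entropy of $\rho M$ with respect to $F$, then splits the logarithm. The term $\int\rho_{F_\eps}\log\rho_{F_\eps}$ is controlled by $\mathcal E[F_\eps]$ via Jensen. The decisive ingredient for the remaining term is a pointwise bound on the von Mises distribution, $M_J(\omega)\le C_d(1+|J|^{(d-1)/2})$, proved by a direct Laplace-type asymptotic of the normalization $Z(J)$. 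Combined with $|J^\eps_{F_\eps}|\le|J_{F_\eps}|\le\rho_{F_\eps}$ this gives $\log M_{J^\eps_{F_\eps}}\le C+\tfrac{d-1}{2}\log(1+\rho_{F_\eps})$, hence $\int\rho_{F_\eps}\log M_{J^\eps_{F_\eps}}\le c(1+\mathcal E[F_\eps])$, and the Gronwall loop closes with constants independent of the truncation. This pointwise $L^\infty$ bound on $M_J$ is the idea you are missing; once you have it, the rest of your argument goes through essentially as the paper's does.
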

\begin{remark}
    There is no satisfactory theory of uniqueness known to us for entropy solutions such as those provided by Theorem~\ref{th:existence_solution}. 
\end{remark}

\section{The linearized equation}\label{sec:lin}

\subsection*{Spectral stability -- proof of Theorem \ref{th:spec-stab}}

Well posedness of the linearized problem \eqref{eq:linearizedBGK} in $L^2(\TT^d \times \Sd)$ is a simple consequence of the fact, that the right hand side 
of the equation is a bounded operator, which follows from the inequalities
$$
    |\rho_f|, |J_f| \le \sqrt{|\Sd|}\, \|f\|_{L^2(\TT^d)}  \,.
$$
For the stability analysis we shall employ the Fourier-Laplace transform: For a function $g=g(t,x)$, $t\ge 0$, $x\in \TT^d$, its Fourier transform is defined by
\begin{equation}
    \label{eq:def_Fourier_transform}
\hat g_k(t) := \int_{\TT^d} g(t,x) e^{-ix\cdot k/\gamma } \ud x \,, \qquad k\in \gamma \mathbb Z^d :=\{\xi \in \mathbb{R}^d: \xi=\gamma k', k'\in \mathbb{Z}^d\} \,,
\end{equation}
and $g$ can be recovered by the inverse transform 
\begin{align*}
    g(t,x) = (2\pi)^d \sum_{k\in  \gamma \mathbb Z^d} \hat g_k(t) e^{ix\cdot k/\gamma} \,.
\end{align*}
The Fourier-Laplace (F-L) transform of $g$ is defined by
\begin{equation}
    \label{eq:def_LF_transform}
\tilde g_k(z) := \int^\infty_0 \hat g_k(t) e^{-zt} \ud t \,, \qquad z\in\mathbb C \,,\quad k\in \gamma\mathbb Z^d\,.
\end{equation}
Applying the F-L transform to the linearized equation \eqref{eq:linearizedBGK}, we obtain 
\begin{align}\label{eq:F-L}
    (1+ z+i k\cdot \omega ) \tilde{f}_k(z,\omega)  = \tilde{\rho}_k(z) M_\J + \mu \tilde{J}_k(z)\cdot \nabla_J M_\J + \hat{f}_k^\circ(\omega) \,,
\end{align}
where, for simplicity, we abbreviate by $\tilde \rho = \tilde \rho_f$ and $\tilde J=\tilde J_f$. By division by the coefficient on the left hand side and taking moments with
respect to $\omega$ we obtain closed systems for $(\tilde J_k,\tilde\rho_k)$ for each $k\in\gamma\mathbb Z^d$:
\begin{equation} \label{eq:tilde}\begin{aligned}
    \tilde{J}_k  &= b_\J \tilde{\rho}_k   + \mu A_\J \tilde{J}_k + r_{J,k} \,,\\
    \tilde{\rho}_k &= a_\J \tilde{\rho}_k  + \mu \,\bar b_\J  \cdot \tilde{J}_k + r_{\rho,k} \,. 
\end{aligned}\end{equation} 
with the coefficients
\begin{equation}\label{aJ-etal}\begin{aligned}
   a_\J(z,k) &= \int_\Sd \frac{M_\J} {1+z+ik\cdot \omega } d\omega \,,\qquad A_\J(z,k) = \int_\Sd \frac{  \omega \otimes \nabla_J M_\J }{1+z+ik\cdot \omega}d\omega \,,\\
   b_\J(z,k) &= \int_\Sd \frac{\omega M_\J}{1+z+ik\cdot \omega} d\omega \,,\qquad \bar b_\J(z,k) = \int_\Sd \frac{\nabla_J M_\J}{1+z+ik\cdot \omega} d\omega \,,
\end{aligned}\end{equation}
and with the inhomogeneities
\begin{align}\label{r_k}
    r_{J,k} =  \int_\Sd \frac{\omega \hat{f}_k^\circ }{1+z+i k\cdot \omega} d\omega \,,\qquad r_{\rho,k} = \int_\Sd \frac{\hat{f}_k^\circ }{1+z+i k\cdot \omega} d\omega  \,.
\end{align}
With the formula \eqref{grad-MJ} for $\nabla_J M_\J$ we have the following relations between the coefficients:
$$
   A_\J = \int_\Sd \frac{  \omega \otimes \omega M_\J }{1+z+ik\cdot \omega}d\omega - \frac{1}{\mu} b_\J\otimes\J \,,\qquad
   \bar b_\J = b_\J - \frac{1}{\mu} a_\J \J \,.
$$
The problem will be solved after reduction to a scalar equation by elimination of $\tilde J_k$. This requires the following result, where initially we restrict our attention to $k\ne 0$.

\begin{lemma} \label{lem:stabnontriv}
    For any $\gamma>0$ and $d\ge 2$ there exist constants $\kappa, \delta, C >0$, such that for any $\mu\in [0,d+\kappa)$, $\J\in\mathcal S(\mu)$, $z\in\mathbb C$
    with $\Re(z)\geq -\delta$, and $0\ne k\in \gamma\mathbb Z^d$ the matrix $\Id - \mu A_\J(z,k)$ is invertible (where `$\Id$' denotes the identity matrix) and satisfies
    \begin{align} \label{eq:Astable}
        \|(\Id - \mu A_\J(z,k))^{-1}\| \leq C \,.
    \end{align}
where $\|\cdot\|$ is any matrix norm.
\end{lemma}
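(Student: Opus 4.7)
Strategy: I would prove the lemma by combining (i) a direct computation showing invertibility of $\Id - d A_0(z,k)$ at the bifurcation point $\mu=d$, $\J=0$ along the imaginary axis $\Re z=0$, (ii) a compactness-continuity argument that extends this to a neighbourhood on a bounded parameter range, and (iii) a decay estimate of $A_\J(z,k)$ at large $|k|$ or $|\Im z|$ that produces invertibility by a Neumann series.

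For Step~(i), I take $\mu = d$, $\J = 0$, $\Re z\ge 0$, $k\ne 0$ and suppose $(\Id - d A_0(z,k))v = 0$ with $v\ne 0$. Using $\nabla_J M_0 = \omega M_0$, testing with $\bar v$ and taking real parts gives
\begin{align*}
    |v|^2 = d(1+\Re z)\int_\Sd \frac{|\omega\cdot v|^2 M_0}{|1+z+ik\cdot\omega|^2}\ud\omega \le \frac{d}{1+\Re z}\int_\Sd |\omega\cdot v|^2 M_0 \ud\omega = \frac{|v|^2}{1+\Re z}\,.
\end{align*}
For $\Re z > 0$ this is an immediate contradiction; for $\Re z = 0$ equality throughout would force $\Im z + k\cdot\omega = 0$ on $\{\omega\cdot v\ne 0\}$, a set of zero measure in $\Sd$ since $k\ne 0$, so $v = 0$. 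The same computation gives the strict bound directly when $\mu < d$.

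For Step~(ii), I exploit the rotational covariance $A_{R\J}(z, Rk) = R A_\J(z,k) R^\top$ for $R\in SO(d)$ to fix the direction of $\J$. Since $L_\mu\to 0$ as $\mu\to d^+$, the parameters $(\mu, \J, z, k)$ with $\mu\in [0, d+\kappa]$, $\J\in \mathcal{S}(\mu)$, $\Re z\in[-\delta,R_0]$, $|\Im z|\le R_0$, and $0<|k|\le R_0$ range (modulo rotation) over a compact set, in which $k\in\gamma\Z^d$ takes only finitely many values; on it, $A_\J(z,k)$ is continuous in all parameters. Step~(i) yields invertibility on the portion $\Re z = 0$, $\J = 0$, $\mu\in [0,d]$, $k\ne 0$, and continuity of $\det(\Id - \mu A_\J(z,k))$, together with the smallness of $|\J| = L_\mu$ for $\mu$ near $d$, extends invertibility to the whole compact set provided $\kappa, \delta > 0$ are small enough. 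Compactness then delivers a uniform bound $\|(\Id - \mu A_\J(z,k))^{-1}\|\le C_0$.

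For Step~(iii), for $|k| + |\Im z| + \max(\Re z,0)\ge R_0$ I would exploit the oscillation of $1/(1+z+ik\cdot\omega)$ against the smooth density $\omega\otimes \nabla_J M_\J$ on the sphere: parametrising $\omega$ by latitude $s = k\cdot\omega/|k|$ and integrating by parts in $s$ (handling the poles $s=\pm 1$ by stationary phase), I expect
\begin{align*}
    \|A_\J(z, k)\|\le \frac{C_1}{(1+|k|+|\Im z|+\max(\Re z,0))^{1/2}}\,,
\end{align*}
uniformly for $\mu\in [0, d+\kappa]$, $\J\in \mathcal{S}(\mu)$, and $\Re z\ge -\delta$ with $\delta < 1/2$. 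Choosing $R_0$ large forces $\|\mu A_\J\|\le 1/2$, so $\Id - \mu A_\J$ is invertible by a Neumann series with inverse norm at most $2$; combined with Step~(ii) this establishes the lemma. The principal technical obstacle lies here: since $|1+z+ik\cdot\omega|$ is bounded below only by $1-\delta$, the stationary-phase analysis must be carried out carefully to maintain uniformity in $\Re z\in [-\delta,0]$ and in $\J\in \mathcal{S}(\mu)$, though any positive fractional decay rate in $|k|+|\Im z|$ suffices to close the argument.
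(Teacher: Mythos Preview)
Your proposal is correct and would yield the lemma, but it follows a genuinely different route from the paper's proof.

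The paper does not split into a compact region plus a decay-at-infinity region. Instead it proves a quantitative lower bound $|\Omega^*(\Id-\mu A_0)\Omega|\ge c(\gamma,d)>0$ valid \emph{simultaneously} for all $0\ne k\in\gamma\Z^d$ and all $\Im z\in\R$, in one stroke: it isolates the set $\mathbb S_k=\{\omega:\ \sign(k\cdot\omega)=\sign(\Im z),\ (k\cdot\omega)^2\ge|k|^2/2\}$, on which $(\Im z+k\cdot\omega)^2\ge\gamma^2/2$, and uses this to peel off a strictly positive amount from the trivial bound $d\int|\omega\cdot\Omega|^2 M_0=1$. Only after this uniform estimate is in hand does the paper invoke continuity once to pass to small $\delta>0$ and small $\mu-d>0$.

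Your three-step plan is instead of ``limiting absorption'' type: injectivity on the critical set $\Re z=0$ (your Step~(i), which is a clean argument), then soft compactness on a bounded parameter box (Step~(ii)), then decay $\|A_\J(z,k)\|\to 0$ outside that box (Step~(iii)). All three steps are sound. For Step~(iii) you can sidestep the delicate stationary-phase analysis at the poles by writing $\frac{1}{1+z+ik\cdot\omega}=\int_0^\infty e^{-(1+z)t}e^{-itk\cdot\omega}\ud t$ and invoking the classical bound $\big|\int_\Sd G(\omega)e^{-itk\cdot\omega}\ud\omega\big|\le C(1+t|k|)^{-(d-1)/2}$ for smooth $G$; integrating in $t$ gives $\|A_\J(z,k)\|=O(|k|^{-(d-1)/2})$ uniformly in $\Im z$ and in bounded $\J$, which is more than enough.

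The trade-off: the paper's argument is shorter, entirely elementary, and yields explicit constants depending only on $d$ and $\gamma$; yours is more modular and conceptual but is non-constructive in the compact region and imports a heavier oscillatory-integral fact. Both proofs ultimately rest on the same mechanism---that for $k\ne 0$ the phase $\Im z+k\cdot\omega$ cannot vanish on a set of positive spherical measure---but the paper exploits this quantitatively via the fixed set $\mathbb S_k$, whereas you exploit it qualitatively in Step~(i) and then let compactness and decay do the rest.
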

\begin{proof} Since the matrix depends smoothly on $\mu$ and $\J$, it is sufficient to prove the result for the case $\mu\le d$, $\J=0$, since then it can be extended
by continuity to $0<\mu-d\ll 1$, $|\J|\ll 1$.

With $\Omega\in \mathbb{C}^d$, $|\Omega|=1$, it is sufficient to prove that $\Omega^{tr}(\Id - \mu A_0(z,k))\Omega$ is bounded away from zero by a constant only
depending on $\gamma$ and $d$ (where `$tr$' denotes `transpose'). We immediately have
\begin{align}
  \left|\Omega^*(\Id - \mu A_0)\Omega \right| \ge 1 - \mu \int_\Sd \frac{|\omega\cdot\Omega|^2 M_0}{|1+z+ik\cdot\omega|}\ud\omega
  \ge 1 - d\int_\Sd \frac{|\omega\cdot\Omega|^2 M_0}{((1-\delta)^2 + (\Im(z)+k\cdot\omega)^2)^{1/2}}\ud\omega \,.
\end{align}
As a consequence of
$$
    \int_\Sd \omega_i \omega_j M_0 \ud\omega = \frac{\delta_{ij}}{d} \,,\qquad i,j = 1,\ldots d \,,
$$
the right hand side vanishes, if the denominator is replaced by 1. Therefore help from the imaginary part is needed to allow
for a positive $\delta$. We define
\begin{equation}\label{def:Sk}
    \mathbb S_k = \{\omega\in \mathbb S^d:\, \sign(k\cdot\omega) = \sign(\Im(z))\,,\, (k\cdot\omega)^2 \ge |k|^2/2 \} \,,
\end{equation}
implying
$$
    (\Im(z) + k\cdot\omega)^2 \ge \frac{\gamma^2}{2} \qquad \mbox{for } \omega\in\mathbb S_k \,,
$$
where we have used $0\ne k \in \gamma\mathbb Z^d$. It is important to note that $\mathbb S_k$ has a positive $(d-1)$-dimensional surface measure,
which implies that
$$
    \alpha(\Omega) :=  d\int_{\mathbb S_k} |\omega\cdot\Omega|^2 M_0 \ud\omega
$$
takes only positive values. By rotational symmetry it only depends on the angle between $\Omega$ and $k/|k|$ (note that $\mathbb S_k$ only depends on $k/|k|$). 
With these notations we have
$$
  \left|\Omega^{tr}(\Id - \mu A_0)\Omega \right| \ge 1 - \frac{\alpha(\Omega)}{((1-\delta)^2 + \gamma^2/2)^{1/2}} - \frac{1-\alpha(\Omega)}{1-\delta} \,.
$$
For $\delta = 0$ the right hand side takes the value
$$
   \alpha(\Omega) \left( 1 - \frac{1}{(1+\gamma^2/2)^{1/2}}\right) \ge \alpha_{min} \left( 1 - \frac{1}{(1+\gamma^2/2)^{1/2}}\right) >0 \,,
$$
where $\alpha_{min} := \min_{|\Omega|=1} \alpha(\Omega)>0$, since it is the minimum of a positive valued continuous function over a compact manifold.
It is independent of $k$ by rotational symmetry. By continuity the positivity is preserved for small enough positive $\delta$, which completes the proof.
\end{proof}

With $\mu,\J,z,k$ as in Lemma \ref{lem:stabnontriv} the system \eqref{eq:tilde} can be reduced to the scalar equation
\begin{equation}\label{tilde-rho}
   \left( 1 - a_\J - \mu\, \bar b_\J^{tr} (\Id - \mu A_\J)^{-1} b_\J \right) \tilde\rho_k = r_{\rho,k} +  \mu\, \bar b_{\mathcal J}^{tr}\,(\Id - \mu A_\J)^{-1} r_{J,k} \,.
\end{equation}
As in the proof of Lemma \ref{lem:stabnontriv} the essential information will come from the case $\mu\le d$, $\J=0$. Straightforward computation gives
$$
    a_0 = c_0 \,,\qquad \bar b_0 = b_0 = c_1\frac{k}{|k|} \,,\qquad A_0 k = c_2 k \,,
$$
with
$$
   c_j(z,k) = \int_\Sd \frac{\omega_1^j M_0}{1+z+i|k|\omega_1}\ud\omega \,,\qquad j=0,1,2\,.
$$
With these observations, the coefficient of $\tilde\rho_k$ in \eqref{tilde-rho} with $\mu\le d$, $\J=0$ becomes
\begin{equation}\label{rho-coeff}
    h(z,k) := 1 - c_0 - \frac{\mu c_1^2}{1-\mu c_2} \,,
\end{equation}
and we shall need estimates on $c_0,c_1,c_2$:

\begin{lemma}\label{lem:est-cj}
Let $\Re(z)\ge 0$ and $0\ne k\in \gamma\mathbb Z^d$. Then
$$
   \Re(c_0) \le 1 - \phi_0(\gamma,d) \,,\qquad |c_1| \le \frac{1}{2\sqrt{d}} + \frac{1}{\gamma} \,,\qquad
   d|c_2| \le 1 - \alpha_2(d,\eps)\phi_2(\eps\gamma)  \,,
$$
with $\phi_0(\cdot,d),\phi_2:\,[0,\infty) \to [0,1)$ continuous and increasing, $\phi_0(0,d)=\phi_2(0)=0$, $\phi_0(\infty,d)=\phi_2(\infty)=1$, and with
$$
    \alpha_2(d,\eps) := d\int_{\mathbb S_\eps^{d-1}} \omega_1^2 M_0 \ud\omega \,,\qquad
    \mathbb S_\eps^{d-1} := \{\omega\in\Sd:\, \omega_1 \ge \eps\} \,,\quad 0<\eps<1 \,,
 $$
a continuous, strictly decreasing function of $\eps$, satisfying $\alpha_2(d,0)=1/2$, $\alpha_2(d,1)=0$.
\end{lemma}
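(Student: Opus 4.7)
The plan is to reduce the three estimates to weighted one-dimensional integrals on the sphere. First I would rotate coordinates so that $k = |k|e_1$ and hence $k\cdot\omega = |k|\omega_1$, write $z = (a-1)+ib$ with $a = 1 + \Re(z)\ge 1$ and $b = \Im(z)\in\mathbb{R}$, and use that $|k| \ge \gamma$ since $0 \ne k\in\gamma\mathbb{Z}^d$. A direct check using the change $\omega\mapsto -\omega$ combined with $z\mapsto\bar z$ shows that $\Re(c_0)$, $|c_1|$ and $|c_2|$ are unchanged, so I would assume $b\ge 0$ throughout.

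For $c_2$, which is the cleanest case, I would split the sphere into $\mathbb S_\eps^{d-1} = \{\omega_1\ge\eps\}$ and its complement. On $\mathbb S_\eps^{d-1}$ the assumption $b\ge 0$ gives $b+|k|\omega_1\ge\gamma\eps$, so $|1+z+i|k|\omega_1| = \sqrt{a^2+(b+|k|\omega_1)^2} \ge \sqrt{1+\gamma^2\eps^2}$, whereas off the set the trivial bound $|1+z+i|k|\omega_1|\ge 1$ suffices. Inserting these into $d|c_2|\le d\int_\Sd\omega_1^2 M_0/|1+z+i|k|\omega_1|\ud\omega$ and using $d\int_\Sd\omega_1^2 M_0\ud\omega=1$ immediately yields $d|c_2| \le (1-\alpha_2(d,\eps)) + \alpha_2(d,\eps)/\sqrt{1+\gamma^2\eps^2} = 1-\alpha_2(d,\eps)\phi_2(\gamma\eps)$ with the explicit choice $\phi_2(x):=1-(1+x^2)^{-1/2}$, which has all the stated properties; the claimed properties of $\alpha_2(d,\eps)$ follow directly from its definition.

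For $c_0$, the same naive $\{\omega_1\ge\eps\}$ split would give a bound that saturates at $\phi_0(\infty,d)\le\alpha_2(d,0)=1/2$, short of the advertised limit. The resolution is to exploit full Poisson dispersion at scale $|k|\ge\gamma$: the integrand $a/(a^2+(b+|k|\omega_1)^2)$ is only large on the thin shell $B_L:=\{|b+|k|\omega_1|\le L\}$, and since the density of $\omega_1$ under $M_0$ is bounded for $d\ge 3$ (with an integrable singularity at $\pm 1$ for $d=2$), we have $M_0(B_L)\le C_d L/|k|\le C_d L/\gamma$ uniformly in $b$. Bounding the integrand by $1/a\le 1$ on $B_L$ and by $a/(a^2+L^2)$ on the complement, then optimizing $(a,L)$ -- taking e.g.\ $L\sim\sqrt{\gamma}$ as $\gamma\to\infty$ -- produces $\Re(c_0)\le 1-\phi_0(\gamma,d)$ with $\phi_0(\gamma,d)\to 1$; strict positivity of $\phi_0$ for small $\gamma>0$ follows from the easier $\mathbb S_\eps^{d-1}$-split with any fixed $\eps\in(0,1)$.

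For $c_1$ I would split $\{|\omega_1|<\eta\}$ and $\{|\omega_1|\ge\eta\}$ with $\eta = 1/(2\sqrt d)$. The first piece is bounded directly by $\eta$ since the integrand has modulus at most $|\omega_1|\le\eta$ (using $|1+z+i|k|\omega_1|\ge 1$). For the second, the algebraic identity
\begin{equation*}
\frac{\omega_1}{1+z+i|k|\omega_1} = \frac{1}{i|k|}\Bigl(1-\frac{1+z}{1+z+i|k|\omega_1}\Bigr)
\end{equation*}
produces a constant piece of modulus $\le 1/|k|\le 1/\gamma$ plus a term of the form $(1+z)/(i|k|)$ times a truncated $c_0$-integral, which is controlled by Cauchy--Schwarz against the same dispersion bound $\int M_0/|1+z+i|k|\omega_1|^2\ud\omega \lesssim 1/(\gamma a)$ derived while analysing $c_0$. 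The main technical obstacle is precisely the $c_0$ estimate, since the limit $\phi_0(\infty,d)=1$ forces the two-scale dispersion split and a careful joint optimization in $a\ge 1$, $b\in\mathbb{R}$, and $L$; once those dispersion bounds are in place, the $c_1$ estimate follows with comparatively little additional work.
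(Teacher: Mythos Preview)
Your $c_2$ argument matches the paper's exactly, and your $c_0$ approach is in the same spirit (both exploit dispersion of the Poisson kernel against the pushforward density of $\omega_1$). However, your claim that $M_0(B_L)\le C_d L/|k|$ uniformly in $b$ fails for $d=2$: when $B_L$ straddles $\omega_1=\pm 1$ the measure is of order $\sqrt{L/|k|}$, not $L/|k|$, because the density $\tfrac{1}{\pi}(1-\omega_1^2)^{-1/2}$ blows up there. The paper handles $d=2$ by a separate split, isolating the region $(1-\omega_1^2)|k|<1$ and bounding each resulting piece by $C/\sqrt{|k|}$.

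The more substantial gap is your $c_1$ argument. Your decomposition produces \emph{three} nonzero contributions --- the $|\omega_1|<\eta$ piece, the constant $1/(i|k|)$ piece, and the residual $(1+z)/(i|k|)$-term --- whereas the stated bound $|c_1|\le \tfrac{1}{2\sqrt d}+\tfrac{1}{\gamma}$ only has room for two. Your proposed control of the third term via Cauchy--Schwarz against $\int M_0/|1+z+i|k|\omega_1|^2\ud\omega\lesssim 1/(\gamma a)$ does not work uniformly: multiplying by $|1+z|/|k|$ leaves $\sqrt{a^2+b^2}/(|k|\sqrt{\gamma a})$, which blows up as $b\to\infty$. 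One can patch this by treating large $b$ separately and show the third term is $o(1)$ as $\gamma\to\infty$, which would rescue the downstream application with a larger $\gamma_{\min}$, but this does not prove the lemma as stated. The paper takes a different route for $c_1$: it symmetrizes $\omega_1\mapsto -\omega_1$ to rewrite
\[
c_1 = -i|k|\int_{\Sd}\frac{\omega_1^2 M_0}{(1+z)^2+|k|^2\omega_1^2}\,\ud\omega,
\]
then minimizes $|(1+z)^2+|k|^2\omega_1^2|$ explicitly over $\Re(z)\ge 0$, $\Im(z)\in\R$, splits at $|k||\omega_1|=1$, and applies Jensen's inequality ($\int|\omega_1|M_0\ud\omega\le 1/\sqrt d$) on the outer piece to obtain the exact constant with no residual term.
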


\begin{proof}
In the following we set $z=x+iy$, $x\ge 0$, and compute
\begin{align}
  \Re(c_0) &= \int_{\Sd} \frac{(1+x)M_0 \ud\omega}{(1+x)^2 + (y+|k|\omega_1)^2} < \frac{1}{1+x} \le 1\,.
\end{align}
On the other hand, for $d\ge 3$,
\begin{align}\label{Re-est}
  \Re(c_0)  \le c_d \int_{-1}^1 \frac{(1+x)d\omega_1}{(1+x)^2 + (y+|k|\omega_1)^2} = \frac{c_d}{|k|} \int_{(y-|k|)/(1+x)}^{(y+|k|)/(1+x)} \frac{du}{1+u^2} \le \frac{c_d \pi}{|k|} \,.
\end{align}
For $d=2$ and $|k|>1$,
$$
   \Re(c_0)  = \frac{1}{2\pi}\int_{-1}^1 \frac{(1+x)}{(1+x)^2 + (y+|k|\omega_1)^2} \frac{d\omega_1}{\sqrt{1-\omega_1^2}} = \Re_1 + \Re_2
$$
with
$$
   \Re_1 := \frac{1}{2\pi}\int_{(1-\omega_1^2)|k|<1} \frac{(1+x)}{(1+x)^2 + (y+|k|\omega_1)^2} \frac{d\omega_1}{\sqrt{1-\omega_1^2}} 
   \le \frac{1}{\pi(1+x)}\int_{\sqrt{1-1/|k|}}^1 \frac{d\omega_1}{\sqrt{1-\omega_1}} \le \frac{2}{\pi\sqrt{|k|}}
$$
and
$$
  \Re_2 \le \frac{\sqrt{|k|}}{2\pi}  \int_{-1}^1 \frac{(1+x)d\omega_1}{(1+x)^2 + (y+|k|\omega_1)^2} \le \frac{1}{2\sqrt{|k|}} \,,
$$
where the last inequality follows from \eqref{Re-est}.
This completes the proof of the estimate for $\Re(c_0)$ with $\phi_0(\gamma,d) = \max\{0,1-c_d\pi/\gamma\}$ for $d\ge 3$ and a similar definition for $d=2$.\\
For the estimation of $c_1$ we start rewriting it by symmetrization ($\omega_1 \to -\omega_1$):
\begin{align}
  c_1 &= \frac{1}{2} \int_{\Sd} \left( \frac{1}{1+ z + i|k|\omega_1} - \frac{1}{1+ z - i|k|\omega_1}\right)\omega_1 M_0 \ud\omega
     = -i|k| \int_{\Sd} \frac{\omega_1^2 M_0 \ud\omega}{(1+z)^2 + |k|^2\omega_1^2} 
\end{align}
This implies
$$
   |c_1| \le |k| \int_{\Sd} \frac{\omega_1^2 M_0 \ud\omega}{\sqrt{((1+x)^2-y^2+ |k|^2\omega_1^2)^2 + 4(1+x)^2 y^2}} \,.
$$
Minimization of the denominator with respect to $x\ge0$ and $y\in\mathbb R$ gives
$$
   \sqrt{((1+x)^2-y^2+ |k|^2\omega_1^2)^2 + 4(1+x)^2 y^2} \ge \left\{\begin{array}{ll} 1+|k|^2\omega_1^2 & \mbox{for }  |k|^2\omega_1^2 \le 1 \,,\\
                                                                                                              2|k| |\omega_1| & \mbox{for }  |k|^2\omega_1^2 > 1 \,,\end{array}\right.
$$
and, thus,
$$
  |c_1| \le \frac{1}{2} \int_{|k|^2\omega_1^2 > 1} |\omega_1| M_0 \ud\omega + |k| \int_{|k|^2\omega_1^2 \le 1} \frac{\omega_1^2 M_0 \ud\omega}{1+ |k|^2\omega_1^2}
    \le \frac{1}{2\sqrt{d}} + \frac{1}{|k|} \,.
$$
Jensen's inequality has been used for the estimation of the first term.\\
For $c_2$ we proceed similarly to the estimation of $\Re(c_0)$:
\begin{align}
  d|c_2| &\le d\int_{\Sd} \frac{\omega_1^2 M_0 \ud\omega}{\sqrt{(1+x)^2 + (y + |k|\omega_1)^2}} \le 
  d\int_{\mathbb S_\eps^{d-1}} \frac{\omega_1^2 M_0 \ud\omega}{\sqrt{1 + \eps^2\gamma^2}} + d\int_{\Sd\setminus \mathbb S_\eps^{d-1}} \omega_1^2 M_0 \ud\omega \\
  &= \frac{\alpha_2}{\sqrt{1+\eps^2\gamma^2}} + 1 - \alpha_2 \,.
\end{align}
Here we have assumed $y\ge 0$, which can always be achieved by the transformation $\omega\to-\omega$. This completes the proof with $\phi_2(u) = 1 - (1+u^2)^{-1/2}$.
The stated properties of $\alpha_2$ are obvious from its definition. 
\end{proof}

\begin{corollary}\label{cor:coeff}
There exist $\gamma_{min}, \delta, \kappa>0$ with $\delta <1$, such that for $0<\mu\le d+\kappa$, $\J \in \mathcal S(\mu)$, $\Re(z)\ge -\delta$, $\gamma\ge \gamma_{min}$, 
$0\ne k\in \gamma\mathbb Z^d$, 
$$
    \Re\left( 1 - a_\J - \mu\, \bar b_\J^{tr} (\Id - \mu A_\J)^{-1} b_\J\right) \ge \frac{1}{5} \,,
$$
holds, where $a_\J$, $b_\J$, $\bar b_\J$, and $A_\J$ are defined in \eqref{aJ-etal}.
\end{corollary}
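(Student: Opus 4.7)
The strategy is to prove the estimate first in the reference case $\mu\le d$, $\J=0$, $\Re(z)\ge 0$ with a comfortable margin strictly above $1/5$, and then extend it to the full parameter range by continuity.

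In the reference case, the symmetry $\omega_1\mapsto-\omega_1$ yields $b_0=\bar b_0=c_1\,k/|k|$ and $A_0k=c_2k$, so that $b_0$ is an eigenvector of $A_0$ with eigenvalue $c_2$. Hence $(\Id-\mu A_0)^{-1}b_0=b_0/(1-\mu c_2)$ and the matrix expression collapses to the scalar
\begin{equation*}
    1-a_0-\mu\,\bar b_0^{tr}(\Id-\mu A_0)^{-1}b_0 \;=\; 1-c_0-\frac{\mu c_1^2}{1-\mu c_2} \;=\; h(z,k),
\end{equation*}
already introduced in \eqref{rho-coeff}. To estimate $\Re(h)$, first pick $\eps\in(0,1)$ small enough that $\alpha_2(d,\eps)$ is close to its upper value $1/2$, then choose $\gamma_{min}$ so large that $\phi_2(\eps\gamma)$ and $\phi_0(\gamma,d)$ are simultaneously close to $1$ for every $\gamma\ge\gamma_{min}$. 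Lemma~\ref{lem:est-cj} then makes $\Re(c_0)$ close to $0$, $(d+\kappa)|c_1|^2$ close to $1/4$, and $d|c_2|$ strictly smaller than $1$ with a uniform margin. Since $\mu\le d+\kappa$, we have $|1-\mu c_2|\ge 1-(d+\kappa)|c_2|$ bounded away from zero, whence $|\mu c_1^2/(1-\mu c_2)|\le 3/5$ (say), and $\Re(h)\ge 1/4$ throughout the reference set.

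The extension is performed in two perturbative steps. First, the integrands defining $a_\J,b_\J,\bar b_\J,A_\J$ are analytic in $z$ on $\{\Re(z)>-1\}$ with $z$-derivatives bounded uniformly in $0\ne k\in\gamma\mathbb Z^d$, since the denominators satisfy $|1+z+ik\cdot\omega|\ge 1-\delta>0$. This allows one to enlarge the half-plane $\Re(z)\ge 0$ to $\Re(z)\ge -\delta$ at the cost of an arbitrarily small change in $\Re(h)$ by taking $\delta$ small. Second, $M_\J$ and $\nabla_J M_\J$ depend smoothly on $\J$ with bounds uniform in $\omega$, and $|\J|=L_\mu\to 0$ as $\mu\downarrow d$; hence $(a_\J,b_\J,\bar b_\J,A_\J)$ approach $(a_0,b_0,\bar b_0,A_0)$ uniformly in $(z,k)$ as $(\mu-d,|\J|)\to(0,0)$. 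Lemma~\ref{lem:stabnontriv} already gives a uniform bound on $(\Id-\mu A_\J)^{-1}$ in this range, so by shrinking $\kappa$ and $\delta$ further the total perturbation of the reference quantity is at most $1/20$, preserving the bound $\Re\ge 1/5$.

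\textbf{Main obstacle.} The delicate point is ensuring that the continuity used in the extension step is \emph{uniform} not only on bounded subsets but across the whole admissible range $\Re(z)\ge -\delta$, $0\ne k\in\gamma\mathbb Z^d$. For large $|\Im(z)|$ or $|k|$ all four integral coefficients decay to zero by a Riemann--Lebesgue/oscillatory argument, forcing $h\to 1$, so the bound $\Re(h)\ge 1/5$ is automatic there; the genuine work lies in the compact range of $(z,k)$ where continuity is uniform by compactness. What remains is the bookkeeping of tuning $\eps,\gamma_{min},\kappa,\delta$ consistently so that the chain of estimates lands comfortably above the required $1/5$.
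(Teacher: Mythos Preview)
Your approach is essentially the paper's: reduce to the scalar $h$ at $\J=0$, use Lemma~\ref{lem:est-cj} to obtain $\Re(h)\ge 1/4$ in the reference regime, then extend by continuity in $(z,\mu,\J)$. You are in fact more careful than the paper about the uniformity of the extension in $(z,k)$, which the paper simply asserts. One remark: your split into a compact region (handled by compactness) and a non-compact region (handled by Riemann--Lebesgue decay) is more elaborate than needed. Since $|1+z+ik\cdot\omega|\ge 1-\delta$ uniformly, the perturbations $|a_\J-a_0|$, $|b_\J-b_0|$, $|\bar b_\J-\bar b_0|$, $\|A_\J-A_0\|$ are each bounded by $C|\J|/(1-\delta)$ uniformly in $(z,k)$, and similarly a shift of $\Re(z)$ by $\delta$ perturbs the coefficients by at most $C\delta/(1-\delta)^2$; combined with the uniform bound on $(\Id-\mu A_\J)^{-1}$ from Lemma~\ref{lem:stabnontriv}, this gives the extension directly without splitting. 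Also, the line ``Since $\mu\le d+\kappa$'' inside the reference computation is a slight abuse (the reference set is $\mu\le d$, $\J=0$), but harmless.
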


\begin{proof}
With the estimates of the lemma, we get for the coefficient \eqref{rho-coeff} for $\Re(z)\ge 0$ and  $0<\mu\leq d$:
\begin{align}
    &\Re\left( h\right) \ge 1 - \Re(c_0) - \frac{\mu |c_1|^2}{1-\mu |c_2|} \ge 1 - \Re(c_0) - \frac{d |c_1|^2}{1-d |c_2|} \\
    &\ge \phi_0(\gamma,d) - \frac{1}{4\alpha_2(d,\eps)\phi_2(\eps\gamma)}\left( 1 + \frac{2\sqrt{d}}{\gamma} \right)^2 \ge \frac{1}{4}\,,
\end{align}
where the last inequality is achieved by first choosing $\eps>0$ small enough to get $\alpha_2(d,\eps) = 3/8$ and then choosing $\gamma$ large enough, i.e. 
$\gamma\ge \gamma_{min}(d)>0$.
By continuity with respect to $z$, $\mu$, and $\J$, this result can be extended to $0<-\Re(z)\ll 1$ and $0<\mu-d\ll 1$.
\end{proof}

By Lemma \ref{lem:stabnontriv} and Corollary \ref{cor:coeff} the system \eqref{eq:tilde} can be solved, and we need estimates of the solution.

\begin{lemma}\label{lem:FL-est}
Let $0<\mu\le d+\kappa$, $\J \in \mathcal S(\mu)$, $\Re(z)\ge -\delta$, $\gamma\ge \gamma_{min}$, 
$0\ne k\in \gamma\mathbb Z^d$ (with the notation of Corollary \ref{cor:coeff}). Then \eqref{eq:tilde} has a unique solution, and there exists $C>0$, independent from $z$, $k$, and $f^\circ$,
such that
    \begin{align} \label{est:ylarge} 
        |\tilde{\rho}_k(z)|+|\tilde{J}_k(z)|&\leq 
             \frac{C}{\langle \Im(z)\rangle } \|\hat{f}^\circ_k\|_{L^2(\Sd)}  &&\text{for $|\Im(z)|\geq 2 |k|$} \,,\mbox{where } \langle y\rangle := \sqrt{1+|y|^2} \,,\\
        |\tilde{\rho}_k(z)|+|\tilde{J}_k(z)|&\leq \frac{C}{\langle k\rangle^{1/5} }\|\hat{f}^\circ_k\|_{L^2(\Sd)}  &&\text{for $|\Im(z)|\leq 2 |k|$} \,. \label{est:ysmall}
    \end{align}
\end{lemma}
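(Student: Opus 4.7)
The plan is to first reduce the full system \eqref{eq:tilde} to the scalar equation \eqref{tilde-rho} for $\tilde\rho_k$. By Lemma~\ref{lem:stabnontriv} the matrix $\Id - \mu A_\J$ has a bounded inverse in the range of parameters under consideration, and by Corollary~\ref{cor:coeff} the scalar coefficient $h:=1 - a_\J - \mu\,\bar b_\J^{tr}(\Id - \mu A_\J)^{-1}b_\J$ on the left of \eqref{tilde-rho} satisfies $|h|\ge \Re(h)\ge 1/5$. This yields unique solvability of \eqref{tilde-rho} and, via the first line of \eqref{eq:tilde}, of the whole system, together with the intermediate bound
$$
   |\tilde\rho_k(z)| + |\tilde J_k(z)| \le C\bigl(|r_{\rho,k}(z)| + |r_{J,k}(z)|\bigr),
$$
since the coefficients $a_\J, b_\J, \bar b_\J$ defined in \eqref{aJ-etal} are bounded uniformly in $(z,k)$ by the pointwise estimate $|1+z+ik\cdot\omega|\ge 1-\delta>0$.

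It therefore suffices to estimate the inhomogeneities $r_{\rho,k}, r_{J,k}$ from \eqref{r_k}. Applying Cauchy--Schwarz in $\omega$ and using $|\omega|=1$, both quantities are controlled by $I(z,k)^{1/2}\|\hat f^\circ_k\|_{L^2(\Sd)}$, where
$$
   I(z,k) := \int_\Sd \frac{\ud\omega}{(1+\Re(z))^2 + (\Im(z) + k\cdot\omega)^2}.
$$
For $|\Im(z)|\ge 2|k|$ the estimate is straightforward: since $|k\cdot\omega|\le|k|\le|\Im(z)|/2$, the denominator of the integrand exceeds $(1-\delta)^2 + |\Im(z)|^2/4 \gtrsim \langle\Im(z)\rangle^2$, so $I \le C/\langle\Im(z)\rangle^2$ and the square root gives \eqref{est:ylarge}. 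For $|\Im(z)|\le 2|k|$ I reduce $I$ to a one-dimensional integral in the projected variable $\omega_1 = k\cdot\omega/|k|\in[-1,1]$ against the surface weight $(1-\omega_1^2)^{(d-3)/2}$. When $d\ge 3$ this weight is bounded by $1$, and the change of variables $u = \Im(z) + |k|\omega_1$ extended to $u\in\mathbb R$ immediately yields $I\le C/|k|$, which is amply sufficient for \eqref{est:ysmall}.

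The main obstacle is the planar case $d=2$, where the weight $(1-\omega_1^2)^{-1/2}$ is integrably singular at $\pm 1$ and the naive bound breaks down. The idea is to split the $\omega_1$-integral at the threshold $|\Im(z)+|k|\omega_1| = |k|^{1/5}$. On the complement of this set the quadratic denominator is at least $|k|^{2/5}$, while the singular weight integrates to a constant over $[-1,1]$, producing a contribution to $I$ of order $|k|^{-2/5}$. On the set itself, which is an interval of length $O(|k|^{-4/5})$ centred at $-\Im(z)/|k|$, the denominator is bounded below only by $(1-\delta)^2$, but a direct computation using the explicit antiderivative of $(1-\omega_1^2)^{-1/2}$ (with a short subcase analysis according to whether the centre is close to, or bounded away from, $\pm 1$) again yields a contribution of order $|k|^{-2/5}$. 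Adding the two contributions gives $I\le C\langle k\rangle^{-2/5}$, and the square root produces \eqref{est:ysmall}. The exponent $1/5$ arises precisely as the balance between the two contributions ($|k|^{2\beta}$ vs.\ $|k|^{(\beta-1)/2}$ with $\beta=1/5$), and carrying through this scaling argument is the delicate point of the proof.
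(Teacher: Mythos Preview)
Your proposal is correct and follows essentially the same route as the paper: reduce to bounding $r_{\rho,k},r_{J,k}$ via the bounded invertibility from Lemma~\ref{lem:stabnontriv} and Corollary~\ref{cor:coeff}, apply Cauchy--Schwarz, and split the resulting $L^2$-integral according to the size of $|\Im(z)+k\cdot\omega|$. The only presentational difference is that the paper treats all dimensions $d\ge 2$ at once by introducing the set $A_\alpha=\{\omega:|\Im(z)+|k|\omega_1|\le|k|^{1-\alpha}\}$ with the uniform sphere-measure bound $|A_\alpha|\le C\langle k\rangle^{-\alpha/2}$ (the exponent $1/2$ being the worst case, coming from $d=2$) and then optimises $\alpha=4/5$; you instead pass to the one-dimensional integral with weight $(1-\omega_1^2)^{(d-3)/2}$, dispatch $d\ge 3$ trivially with the bound $C/|k|$, and handle $d=2$ explicitly via the $\arcsin$ antiderivative. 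Both arguments are the same splitting and the same balance; your parenthetical ``$|k|^{2\beta}$'' should read $|k|^{-2\beta}$, but the optimisation $\beta=1/5$ and the resulting exponent are correct.
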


\begin{proof}
Lemma \ref{lem:stabnontriv} and Corollary \ref{cor:coeff} imply the bounded invertibility of the coefficient matrix, i.e.,
$$
  |\tilde{\rho}_k(z)|+|\tilde{J}_k(z)| \leq C ( |r_{\rho,k}(z)|+|r_{J,k}(z)|) \,. 
$$
With $z=x+iy$, in the case $|y|\geq 2 |k|$ we estimate
    \begin{align*}
        |r_{\rho,k}(z)|+|r_{J,k}(z)|&\leq \left| \int_{\Sd} \frac{\omega \hat{f}^\circ_k(\omega)}{1+z+ik\cdot \omega } \ud{\omega} \right| +\left| \int_{\Sd} \frac{ \hat{f}^\circ_k(\omega)}{1+z+ik\cdot \omega  } \ud{\omega} \right| \\
        &\leq 2 \int_{\Sd} \frac{| \hat{f}^\circ_k(\omega)|}{|1+x+i(y + k\cdot \omega)  |} \ud{\omega}  \,,
    \end{align*}
    and observe that the condition ensures $ |1+x+ i(y+k\cdot \omega)|\geq c \langle y\rangle $ with a constant $c>0$ independent of $k\in \mathbb{Z}^d$ and $y\in \Reals$. Inserting this above yields  
    \begin{align*}
        |r_{\rho,k}(z)|+|r_{J,k}(z)|&\leq \frac{2}{c\langle y\rangle }  \int_{\Sd}|\hat{f}^\circ_k(\omega)| \ud{\omega} \le \frac{C}{\langle y\rangle } \|\hat{f}^\circ_k\|_{L^2(\Sd)} \,,
    \end{align*}
    by the Cauchy-Schwarz inequality.
    For $|y|\leq 2|k|$ we use again the Cauchy-Schwarz inequality:
    \begin{align}
        |r_{\rho,k}(z)|+|r_{J,k}(z)| 
        &\leq 2 \|\hat{f}^\circ_k(\cdot)\|_{L^2(\Sd)} \left\|\frac{1}{1+z+ik\cdot \omega} \right\|_{L^2(\Sd)} \,.
    \end{align}
    It remains to estimate the norm on the right. By rotational symmetry we may assume $k=|k|e_1$ and compute
    \begin{align}\label{eq:L^2prep}
        \left\|\frac{1}{1+z+ik\cdot \omega} \right\|^2_{L^2(\Sd)} &= \int_{\Sd} \frac{1}{(1+x)^2+(y+|k| \omega_1)^2} \ud{\omega} \,.
    \end{align}
The set 
    \begin{align}\label{A_alpha}
        A_\alpha (y,k)          &:= \{\omega\in \Sd: |y+ |k|\omega_1|\leq |k|^{1-\alpha}\} \,,\qquad \alpha>0 \,, 
    \end{align}
is the intersection of the sphere with a strip of width $|k|^{-\alpha}$. Therefore it satisfies
    \begin{align}\label{av-prop}
         |A_\alpha (y,k)|\leq C  \langle k\rangle^{-\alpha/2} \,. 
    \end{align}
    We split the integral~\eqref{eq:L^2prep} as
    \begin{align*}
        \left\|\frac{1}{1+z+ik\cdot \omega} \right\|^2_{L^2(\Sd)} &= \int_{A_\alpha} \frac{1}{(1+x)^2+(y+|k| \omega_1)^2} \ud{\omega} 
        +\int_{\Sd \setminus A_\alpha} \frac{1}{(1+x)^2+(y+|k| \omega_1)^2} \ud{\omega} \\
        &\leq  C \left(\langle k\rangle^{-\alpha/2} + \langle k\rangle^{2\alpha-2} \right) \,.
    \end{align*} 
    Now choosing $\alpha = 4/5$ yields the claim.     
\end{proof}

The next step is to apply the inverse Laplace transform to the solution 
$$
   \tilde{f}_k(z,\omega)  = \frac{\tilde{\rho}_k(z) M_\J + \mu \tilde{J}_k(z)\cdot \nabla_J M_\J + \hat{f}_k^\circ(\omega)}{1+ z+i k\cdot \omega} \,,
$$
of \eqref{eq:F-L}, where the integration contour is chosen to be $z=x+iy \in -\delta + i\mathbb R$, with $\delta<1$ as in Corollary \ref{cor:coeff}:
\begin{align}
   \hat f_k(t,\omega) &= \frac{e^{-\delta t}}{2\pi} \lim_{M\to\infty} \int_{-M}^M e^{iyt} \tilde f_k(z,\omega) \ud y \\
   &= \frac{e^{-\delta t}}{2\pi} \lim_{M\to\infty} \int_{-M}^M e^{iyt} \frac{\tilde{\rho}_k M_\J + \mu \tilde{J}_k\cdot \nabla_J M_\J}{1+z +i k\cdot \omega} \ud y
     + e^{-t(1+ik\cdot\omega)} \hat f_k^\circ \,.
\end{align}
We split the integral as 
\begin{align}
    \hat{f}_k(t,\omega) =& \frac{e^{-\delta t}}{2\pi} \lim_{M\rightarrow \infty} \int_{2|k|\le |y| \le M} e^{iyt} \frac{\tilde{\rho}_k M_\J+\mu \tilde{J}_k \nabla_\J M_\J }{1+z+ik\cdot \omega } \ud{y} \\
    +&\frac{e^{-\delta t}}{2\pi}\int_{|y|\le2|k|} e^{iyt} \frac{\tilde{\rho}_k M_\J+\mu \tilde{J}_k \nabla_\J M_\J }{1+z+ik\cdot \omega } \ud{y}+ e^{-t(1+ik\cdot \omega)} \hat{f}_k^\circ .
\end{align}
For the first integral we use~\eqref{est:ylarge} and for the second integral~\eqref{est:ysmall} to obtain
\begin{align}
    |\hat{f}_k(t,\omega)| \leq C e^{-\delta t} \left(\lim_{M\rightarrow \infty }\int_{-M}^M \frac{\|\hat{f}^\circ_k(\cdot)\|_{L^2(\Sd)}}{\langle y\rangle^2} \ud{y} + \int_{-2|k|}^{2|k|} \frac{\|\hat{f}^\circ_k(\cdot)\|_{L^2(\Sd)}}{|1-\delta+i (y+k\cdot \omega)|\langle k\rangle^{1/5}} \ud{y} + |\hat{f}^\circ_k| \right).
\end{align}
The second integral is computed explicitly:
\begin{align}
    \int_{-2|k|}^{2|k|} \frac{1}{|1-\delta+i (y+k\cdot \omega)|\langle k\rangle^{1/5}} \ud{y} 
    & = \langle k\rangle^{-1/5} \left( \sinh^{-1}\left(\frac{k\cdot\omega+ 2|k|}{1-\delta}\right) - \sinh^{-1}\left(\frac{k\cdot\omega- 2|k|}{1-\delta}\right)\right) \\
    &\le 2 \langle k\rangle^{-1/5} \sinh^{-1}\left(\frac{3|k|}{1-\delta}\right) \le C \,,
\end{align}
with $C$ independent from $k$. Inserting this estimate above yields 
\begin{align}
    |\hat{f}_k(t,\omega)| \leq C e^{-\delta t} \left( \|\hat{f}^\circ_k\|_{L^2(\Sd)}+|\hat{f}^\circ_k(\omega)| \right) \,,
\end{align}
implying
\begin{equation}\label{hatf-est}
   \|\hat f_k\|_{L^2(\Sd)} \le C e^{-\delta t} \|\hat f_k^\circ\|_{L^2(\Sd)} \,.
\end{equation}
It remains to analyze the case $k=0$. The discussion has already been started in Section \ref{sec:results_linearized}, since $\hat f_0 = \overline f$, satisfying
\begin{equation}\label{eq:barf}
   \partial_t \overline f = \mu\overline{J_f}\cdot\nabla_J M_\J - \overline f \,,
\end{equation}
where $\overline{J_f}$ solves the linear ODE system \eqref{Jf-ODE}, and we collect results on the properties of its coefficient matrix.

\begin{lemma}\label{lem:CJ}
Let $\mu>0$, $\J\in\mathcal{S}(\mu)$, and let
$$
   C_{\J} : = \mu \int_\Sd \omega\otimes\nabla_J M_{\J}\ud\omega - \Id \,.
$$
Then $C_\J$ is symmetric and for $\mu\le d$ (i.e. $\J=0$) it is given by
\begin{equation}\label{C0}
  C_0 = \left(\frac{\mu}{d}-1\right)\Id \,.
\end{equation}
For $\mu>d$ (i.e. $|\J|= L(\mu)>0$ with $L(\mu)$ satisfying \eqref{eq:consistency_relation2}), there is a $(d-1)$-dimensional nullspace given by $\mathcal{N}(C_\J) = \J^\bot$.
The remaining eigenvalue $\lambda_\J$ (with eigenvector $\J$) satisfies
$$
   \lambda_\J = \mu-d - \frac{|\J|^2}{\mu} = - 2\left(\frac{\mu}{d}-1\right) + O((\mu-d)^2) \,,\qquad\mbox{as } \mu\to d+\,.
$$
\end{lemma}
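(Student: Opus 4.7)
The plan is to first cast $C_\J$ in a manifestly symmetric form, then exploit the rotational symmetry of the von Mises distribution to diagonalize it. A direct differentiation of $M_J(\omega) = e^{\omega\cdot J}/Z(J)$ yields $\nabla_J M_J = (\omega - \bar\omega_J) M_J$ with $\bar\omega_J := \int_\Sd \omega M_J\ud\omega$. The consistency relation \eqref{eq:consistency} reads $\bar\omega_\J = \J/\mu$ for $\J \in \me(\mu)$, and substitution into the definition of $C_\J$ gives
\begin{equation}
   C_\J = \mu T_\J - \frac{1}{\mu}\J\otimes\J - \Id,\qquad T_\J := \int_\Sd \omega\otimes\omega\,M_\J\ud\omega,
\end{equation}
which is symmetric since $T_\J$ is. For $\J = 0$, rotational invariance forces $T_0 = \Id/d$, so $C_0 = (\mu/d - 1)\Id$.

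For $\mu > d$, with $L := |\J| > 0$, I would combine two identities. Differentiation under the integral gives the covariance formula $\nabla_J \bar\omega_J = T_J - \bar\omega_J \otimes \bar\omega_J$; evaluated at $J = \J$ with $\bar\omega_\J = \J/\mu$, it reduces the previous expression to
\begin{equation}
   C_\J = \mu\,\nabla_J \bar\omega_J\big|_{J=\J} - \Id.
\end{equation}
Rotational symmetry of $M_J$ together with the definition of $\mathfrak c$ in \eqref{eq:consistency} yields $\bar\omega_J = \mathfrak c(|J|)\,J/|J|$ for any $J\neq 0$, and a direct differentiation at $J = \J$, using $\mathfrak c(L)/L = 1/\mu$ from \eqref{eq:consistency_relation2}, produces
\begin{equation}
   \nabla_J \bar\omega_J\big|_{J=\J} = \mathfrak c'(L)\,\hat\J\otimes\hat\J + \frac{1}{\mu}(\Id - \hat\J\otimes\hat\J),\qquad \hat\J := \J/L.
\end{equation}
Substituting gives the rank-one form $C_\J = (\mu\mathfrak c'(L) - 1)\,\hat\J\otimes\hat\J$, from which $\mathcal N(C_\J) = \J^\perp$ is immediate and $\lambda_\J = \mu\mathfrak c'(L) - 1$ is the single nontrivial eigenvalue.

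The remaining computational step is to put $\lambda_\J$ into the announced closed form. Integrating $\frac{\ud}{\ud\theta}(\sin^{d-1}\theta\,e^{r\cos\theta})$ over $[0,\pi]$ and expanding the derivative yields the identity $(d-1)\mathfrak c(r) = r(1 - a(r))$, where $a(r) := \int_\Sd \omega_1^2 M_{re_1}\ud\omega$. Combined with the variance identity $\mathfrak c'(r) = a(r) - \mathfrak c(r)^2$ and the consistency $\mathfrak c(L) = L/\mu$, this gives $a(L) = 1 - (d-1)/\mu$ and hence $\lambda_\J = \mu - d - L^2/\mu$. The asymptotic as $\mu \to d+$ then follows from the Taylor expansion $\mathfrak c(r) = r/d - r^3/(d^2(d+2)) + O(r^5)$: inserting it into $\mu\mathfrak c(L) = L$ gives $L^2 = d(d+2)(\mu-d)/\mu + O((\mu-d)^2)$, which, substituted into the formula for $\lambda_\J$, produces $\lambda_\J = -2(\mu/d-1) + O((\mu-d)^2)$. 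The main obstacle is the angular integration-by-parts: it is precisely the vanishing of $(d-1)\mathfrak c(r) - r(1 - a(r))$ that forces $\mu b - 1 = 0$ in the transverse directions, making the nullspace of $C_\J$ exactly $(d-1)$-dimensional and so driving the bifurcation through the sign change of the remaining eigenvalue $\lambda_\J$.
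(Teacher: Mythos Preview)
Your proof is correct and takes a somewhat different route than the paper. Both proofs begin identically, writing $C_\J = \mu T_\J - \tfrac{1}{\mu}\J\otimes\J - \Id$ via the formula for $\nabla_J M_\J$. From there the paper argues more geometrically: it differentiates the consistency relation \eqref{eq:consistency} along the sphere $\me(\mu)$ to see $\J^\perp \subset \mathcal N(C_\J)$, checks by a direct computation that $\J$ is an eigenvector, and then \emph{uses} the nullspace relation $C_\J z = 0$ for $z\perp\J$ to extract $\int_\Sd \omega_2^2 M_\J\ud\omega = 1/\mu$, which together with $\sum_i\omega_i^2=1$ yields $\int_\Sd\omega_1^2 M_\J\ud\omega = 1-(d-1)/\mu$ and hence $\lambda_\J$. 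You instead recognize $C_\J = \mu\,\nabla_J\bar\omega_J\big|_{J=\J} - \Id$, differentiate the explicit representation $\bar\omega_J = \mathfrak c(|J|)\,J/|J|$ to obtain the rank-one form $C_\J = (\mu\mathfrak c'(L)-1)\,\hat\J\otimes\hat\J$ in one stroke, and compute the remaining eigenvalue through the integration-by-parts identity $(d-1)\mathfrak c(r) = r(1-a(r))$ combined with the variance formula $\mathfrak c'=a-\mathfrak c^2$. Your route is a bit more self-contained---the nullspace drops out of a single differentiation rather than a separate argument---and makes the link to $\mathfrak c'$ explicit; the paper's route avoids computing $\mathfrak c'$ altogether by a clean linear-algebra trick. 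One small inaccuracy in your closing commentary: what forces the transverse eigenvalue to vanish is the consistency relation $\mathfrak c(L)/L=1/\mu$, not the integration-by-parts identity---the latter enters only afterward, to convert $\mu\mathfrak c'(L)-1$ into $\mu-d-L^2/\mu$.
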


\begin{proof}
The gradient with respect to $J$ of the von Mises distribution evaluated at an equilibrium flux $\J\in\mathcal{S}(\mu)$ is given by
\begin{equation}\label{grad-MJ}
  \nabla_J M_\J(\omega) = \omega M_\J(\omega) - \frac{e^{\omega\cdot\J}}{Z(\J)^2} \int_\Sd \omega' e^{\omega'\cdot\J}\ud\omega'
   = \left( \omega - \frac{\J}{\mu}\right)M_\J(\omega) \,,
\end{equation}
where the consistency relation \eqref{eq:consistency} has been used. This implies
$$
    C_\J = \mu \int_\Sd \omega\otimes\omega M_{\J}\ud\omega - \frac{1}{\mu}\J\otimes\J - \Id \,,
$$
and, thus, the symmetry of $C_\J$. Since $M_0 = |\Sd|^{-1}$, \eqref{C0} follows immediately, since
$$
    \int_\Sd \omega_i \omega_j \ud\omega = 0\quad\mbox{for }i\ne j \,,\qquad \mbox{and}\qquad \int_\Sd \omega_i^2 \ud\omega = \frac{|\Sd|}{d} \,.
$$
For $\mu> d$, taking the gradient of the equilibrium relation \eqref{eq:consistency} along $\me(\mu)$ shows $\mathcal{N}(C_\J) = \J^\bot$. With the representation
$\omega = \omega_1 \frac{\J}{|\J|} + \omega^\bot$ (whence $M_\J$ becomes a function of $\omega_1$) the computation
$$
   \left(\int_\Sd \omega\otimes\omega M_{\J}\ud\omega  \right) \J = \int_\Sd \left( \omega_1 \frac{\J}{|\J|} + \omega^\bot\right) \omega_1 |\J| M_\J \ud\omega
   = \left( \int_\Sd \omega_1^2 M_\J \ud\omega \right)\J
$$
shows that $\J$ is an eigenvector of $C_\J$. For the computation of the eigenvalue, the value of the integral on the right hand side is needed. \\
We choose a unit vector $z\in \mathcal{N}(C_\J)$, i.e. $z\cdot\J = 0$ and $|z|=1$. Then, with the representation $\omega = \omega_1 \frac{\J}{|\J|} + \omega_2 z + \omega^\bot$
we have
$$
   0 = C_\J z = \mu \int_\Sd \left( \omega_1 \frac{\J}{|\J|} + \omega_2 z+ \omega^\bot\right) \omega_2 M_\J \ud\omega - z 
   = \left( \mu \int_\Sd \omega_2^2 M_\J \ud\omega - 1\right)z \,.
$$
This implies
$$
    \int_\Sd \omega_2^2 M_\J \ud\omega = \frac{1}{\mu} \,.
$$
By symmetry this remains valid when $\omega_2$ is replaced by $\omega_j$, $j\ne 1$. Since $M_\J$ is normalized, we also have
$$
   1 = \int_\Sd M_\J \ud\omega = \int_\Sd \omega_1^2 M_\J \ud\omega + (d-1) \int_\Sd \omega_2^2 M_\J \ud\omega \,,
$$
Combining these results leads to
$$
    \int_\Sd \omega_1^2 M_\J \ud\omega = 1 - \frac{d-1}{\mu} \,,
$$
which proves the formula for $\lambda_\J$. The asymptotic expansion is a direct consequence of \eqref{L-as}.
\end{proof}

The lemma implies that $\overline{J_f}$ tends to its equilibrium exponentially for $\mu\ne d$ with linear degeneration of the decay rate as $\mu\to d$.
The equilibrium is zero for $\mu< d$, and it is given by $P_\J^\bot \overline{J_{f^\circ}}$ for $\mu>d$. It is obvious from \eqref{eq:barf} that $\overline f = \hat f_0$
also converges exponentially for $\mu\ne d$. Combination of these observations with \eqref{hatf-est} and application of Parseval's identity complete the
proof of Theorem \ref{th:spec-stab}.

\section{Nonlinear stability for small data -- proof of Lemma \ref{lem:existence}  and Theorem~\ref{th:convergence}} 
\label{sec:convergenceProof}

We consider an initial datum close to the manifold of equilibria, i.e. 
$$
   \|F^\circ-F_{\mu,J_1}\|_{H^m_x} =: \eps
$$
is assumed to be small ($\eps<\eps_0$), with $J_1\in \mathcal{S}(\mu)$ and $H^m_x = H^m_x(\T^d \times \Sd)$ as in Theorem \ref{th:spec-stab}.
We start by choosing an improved estimate of the equilibrium flux by projecting the average flux of the initial datum to the equilibrium manifold $\mathcal{S}(\mu)$:
$$
    J_2 := 0 \quad\mbox{for } \mu<d \,,\qquad \mbox{and}\qquad J_2 := L_\mu\frac{\overline{J_{F^\circ}}}{\left|\overline{J_{F^\circ}}\right|} \quad\mbox{for } \mu>d \,,
$$
with $L_\mu$ as in Definition \ref{def:Steady}. In the second case $J_2$ is well defined, if we assume $\eps_0|\Sp^{d-1}|^{1/2}\le L_\mu$, since
$$
    \left|\overline{J_{F^\circ}}\right| \ge L_\mu - \left|\left|\overline{J_{F^\circ}}\right| -\left| J_1\right|\right| 
    \ge L_\mu - \left| \overline{J_{F^\circ}} - J_1\right| \ge L_\mu - |\Sp^{d-1}|^{1/2}\|F^\circ-F_{\mu,J_1}\|_{L^2} \ge L_\mu - |\Sp^{d-1}|^{1/2}\eps>0\,.
$$
As a consequence of the definition of $J_2$ we have
\begin{align}\label{J-dist}
   |J_2 - \overline{J_{F^\circ}}| \le |J_1 - \overline{J_{F^\circ}}| \le \eps \,,
\end{align}
and 
\begin{align}\label{f0-est}
       \|F^\circ - F_{\mu,J_2}\|_{H^m_x} \leq \|F^\circ - F_{\mu,J_1}\|_{H^m_x} +\|F_{\mu, J_1} - F_{\mu,J_2}\|_{H^m_x}
        \leq \eps + C |J_1-J_2| \le (1+2C)\eps \,,
 \end{align}
where we have used that the mapping $J\mapsto  F_{\mu,J}$ from $ \Reals^d$ to $H^m_x$ is locally Lipschitz.

The next step is linearization around $F_{\mu,J_2}$. We rewrite the Vicsek-BGK problem \eqref{nonlinearBGK} in terms of  $f_1=F-F_{\mu,J_2}$:
   \begin{equation}\label{eq:linearized_around_F2}
   \begin{aligned} 
       &\partial_t f_1  = \La_{\mu,J_2} f_1 + \mathcal S_{\mu,J_2}(f_1) \,,\\
       &f_1(t=0)= f^\circ := F^\circ- F_{\mu, J_2} \,,
   \end{aligned}
   \end{equation}
where  
   \begin{align}
      \La_{\mu,J_2}f &= \rho_f M_{J_2} + \mu  J_f\cdot  \nabla_J M_{J_2}- f - \gamma \omega \cdot \nabla_x f    \qquad\mbox{and} \\
      \mathcal S_{\mu,J_2}(f) &=  (\mu+\rho_f)(M_{J_2+J_f} - M_{J_2})  - \mu J_f\cdot \nabla_J M_{J_2}
   \end{align}
are the linearized operator (as in \eqref{eq:linearizedBGK}) and, respectively, the nonlinear remainder.
With the semigroup $\mathcal T_{\mu,J_2}$ generated by $\La_{\mu,J_2}$,  the Duhamel formulation of \eqref{eq:linearized_around_F2} reads
   \begin{align}\label{eq:Duhamel}
       f_1(t) = \mathcal T_{\mu,J_2}(t) f^\circ + \int_0^t \mathcal T_{\mu,J_2}(t-s) \mathcal S_{\mu,J_2}(f_1(s)) \ud s \,.
   \end{align}
Theorem \ref{th:spec-stab} shows that $\mathcal T_{\mu,J_2}$ is bounded on $H^m_x$. Since $\mathcal S_{\mu,J_2}(f)$ depends smoothly on $\rho_f$ and $J_f$
and since $H^m(\T^d)$ is a Banach algebra (as a consequence of the continuous imbedding $H^m(\T^d)\hookrightarrow L^\infty(\T^d)$), local existence of $f_1$ in $H^m_x$ 
can be shown by Picard iteration. Standard arguments prove that finite time blow-up implies blow-up of $\|\rho_{f_1}\|_{L^\infty(\T^d)}$, completing the proof of Lemma \ref{lem:existence}.

The Banach algebra property also implies that for each $R>0$ there exists $C_R>0$, such that
   \begin{align} \label{eq:estimate_S}
       \|\mathcal S_{\mu,J_2}(f)\|_{H^m_x} \leq C_R\|f\|^2_{H^m_x} \,,\qquad \mbox{for } \|f\|_{H^m_x} \le R \,.
   \end{align}
Since by construction $\int f^\circ dxd\omega=P^\bot_{J_2} \overline{J_{f^\circ}}= 0$, Theorem~ \ref{th:spec-stab} can be used for the action of the semigroup on $f^\circ$:
    \begin{align} \label{eq:decay_constant_linear}
        \|\mathcal T_{\mu,J_2}(t) f^\circ\|_{H^m_x} \leq C e^{-\lambda t} \| f^\circ\|_{H^m_x} \le C_1\eps e^{-\lambda t}\,,
    \end{align}
by using \eqref{f0-est}. Application of these results to \eqref{eq:Duhamel} gives
$$
   \|f_1(t)\|_{H^m_x} \le C_1\eps e^{-\lambda t} + C_R \int_0^t \|f_1(s)\|^2_{H^m_x} \ud s \,,
$$
as long as $\|f_1(s)\|_{H^m_x}\le R$ for $0\le s\le t$. For fixed $R$ we restrict $\eps_0$ further to satisfy $2C_1\eps_0 \le R$ and try to prove
\begin{align}\label{f(t)-est}
   \|f_1(t)\|_{H^m_x} \le 2 C_1\eps e^{-\lambda t}
\end{align}
on an appropriate time interval. This is satisfied, if
$$
   4C_R C_1^2\eps^2 \int_0^t e^{-2\lambda s}\ud s \le \frac{2C_R C_1^2 \eps^2}{\lambda} \le C_1\eps e^{-\lambda t} \,,
$$
which holds for 
\begin{align}\label{est-int}
  t\le \frac{1}{\lambda} \log\left(\frac{\lambda}{2C_R C_1 \eps}\right) \,.
\end{align}
Now we set 
\begin{align}\label{T-def}
    T := \frac{1}{\lambda} \log\left(4C_1\right) \,,
\end{align}
which satisfies \eqref{est-int} under the further restriction
$$
    \eps_0 \le \frac{\lambda}{8C_R C_1^2} \,.
$$
This implies  
$$
   \|f_1\|_{H^m_x} \le 2C_1\eps \quad\mbox{on } [0,T] \qquad\mbox{and}\qquad  \|f_1(T)\|_{H^m_x} \le \frac{\eps}{2} \,.
$$
Now the procedure is iterated with $\eps$ replaced by $\eps/2$, producing a sequence $(J_{k+1},f_k)_{k\ge 1}$ with $J_{k+1}\in S(\mu)$ and 
$f_k= F-F_{\mu,J_{k+1}}: \,[(k-1)T,kT]\to H^m_x$, such that 
$P^\bot_{J_{k+1}} \overline{J_{f_k((k-1)T)}}=0$,
$$
   \|f_k\|_{H^m_x} \le \frac{C_1 \eps}{2^{k-2}} \quad\mbox{on } [(k-1)T,kT] \,,\qquad\mbox{and}\qquad  \|f_k(kT)\|_{H^m_x} \le \frac{\eps}{2^k} \,.
$$
From \eqref{J-dist} we deduce $|J_1-J_2|\le 2\eps$ and therefore
\begin{align}\label{Jk-Jk+1}
    |J_k-J_{k+1}| \le \frac{\eps}{2^{k-2}} \,,\qquad k\ge 1 \,.
\end{align}
This implies convergence: 
$$
    \lim_{k\to\infty} J_k = J_\infty \,.
$$
We expect $F(t) \to F_{\mu,J_\infty}$ as $t\to\infty$ and estimate for $(k-1)T\le t \le kT$:
\begin{align}
  \|F(t) - F_{\mu,J_\infty}\|_{H^m_x} &\le \|f_k(t)\|_{H^m_x} + \sum_{l=k+1}^\infty \|F_{\mu,J_l} - F_{\mu,J_{l+1}}\|_{H^m_x} \le \frac{C_1\eps}{2^{k-2}} 
  + \sum_{l=k+1}^\infty \frac{C_2 \eps}{2^l} = \frac{C_3\eps}{2^k}  \\
  &\le \frac{C_3\eps}{2^{t/T}} = C_3 \eps e^{-\hat\lambda t}\,,\qquad \mbox{with } \hat\lambda = \frac{\lambda \log 2}{\log(4C_1)} \, ,
\end{align}
where we have used Lipschitz continuity as in \eqref{f0-est} as well as \eqref{Jk-Jk+1} and \eqref{T-def}. This completes the proof.

\section{Global existence of weak solutions -- proof of Theorem~\ref{th:existence_solution}  }
\label{sec:existence}

In this section we prove the global well-posedness of the nonlinear equation, proceeding similarly to the methods of~\cite{perthame1989global} for the Boltzmann-BGK equation. We introduce a family of approximate equations
\begin{equation} \label{eq:approxBGK}
\begin{aligned}
    \partial_t F + \omega \cdot \nabla_x F &= \rho_F M_{J^\epsilon_F} - F \,, \\
        F(t=0) &= F^\circ \,,
\end{aligned}
\end{equation}
where $J_F^\eps$ is given by
\begin{align} \label{def:Jeps}
    J^\eps_F = \frac{J_F}{|J_F|} \min\{|J_F|, \frac{1}{\eps}  \} \,,\qquad \eps > 0\,.
\end{align}
For the initial datum, we impose boundedness of the total mass and of the logarithmic entropy:
\begin{align}
    \int_{\T^d} \int_{S^{d-1}} F^\circ (1+|\log F^\circ|) \ud{x} \ud{\omega} < \infty \,.  
\end{align}
For every $\eps>0$, classical methods show that the problem~\eqref{eq:approxBGK} has a unique global mild solution solution in $C([0,\infty); L^1(\T^d\times\Sp^{d-1}))$. In order to pass to the limit $\eps\rightarrow 0$, we need estimates which are uniform in $\eps\rightarrow 0$. For this purpose, a bound for the $J$-dependence of the von Mises distribution is needed.

\begin{lemma} \label{lem:bounds_Mises}
   For $d\ge 2$ there exists $C_d>0$ such that the von Mises distribution \eqref{def:MJ} satisfies
    \begin{align}
        0 < M_J(\omega) \leq C_d \left(1+ |J|^{(d-1)/2} \right) \,,\qquad \forall\,\omega\in \Sp^{d-1}\,.
    \end{align}
\end{lemma}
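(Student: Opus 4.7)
The plan is to get the bound by a direct pointwise estimate of the numerator $e^{\omega\cdot J}$ combined with a sharp lower bound on the normalization constant $Z(J)$. Since
$e^{\omega\cdot J}\le e^{|J|}$ for every $\omega\in\Sp^{d-1}$, the task reduces to showing that
\[
   Z(J) \ge \frac{c_d\, e^{|J|}}{(1+|J|)^{(d-1)/2}}
\]
for some constant $c_d>0$, and then the claimed inequality follows with $C_d = c_d^{-1}$.

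By rotational symmetry of the sphere we may assume $J=|J|e_1$, so that, using the standard slicing of $\Sp^{d-1}$ by the hyperplanes $\omega_1 = s$,
\[
   Z(J) = |\Sp^{d-2}|\int_{-1}^{1} e^{|J| s}\,(1-s^2)^{(d-3)/2}\,\ud s,
\]
where for $d=2$ the factor $|\Sp^0|=2$ and $(1-s^2)^{-1/2}$ is the surface element. For $|J|\le 1$ the integrand is bounded below by a positive constant independent of $J$, so $Z(J)\ge c>0$ and the desired bound is trivial. The nontrivial range is therefore $|J|\ge 1$, where one applies a Laplace-type change of variables $s=1-u/|J|$, yielding
\[
   Z(J) = \frac{|\Sp^{d-2}|\,e^{|J|}}{|J|^{(d-1)/2}} \int_0^{2|J|} e^{-u}\, u^{(d-3)/2}\,(2-u/|J|)^{(d-3)/2}\,\ud u.
\]
Restricting the $u$-integral to $u\in[0,1]$ (or any fixed positive-measure subset avoiding the turning point $u=2|J|$) one sees that the remaining integral is bounded from below by a positive constant depending only on $d$, which gives the required lower bound on $Z(J)$ for $|J|\ge 1$.

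Combining the two regimes into the single expression $1+|J|^{(d-1)/2}$ (which is comparable to $(1+|J|)^{(d-1)/2}$) and dividing $e^{\omega\cdot J}\le e^{|J|}$ by this lower bound on $Z(J)$ yields $M_J(\omega)\le C_d(1+|J|^{(d-1)/2})$. The only mildly delicate point is to verify that the reduced one-dimensional integral makes sense and yields a uniform constant also in the critical case $d=2$, where the weight $(1-s^2)^{-1/2}$ is integrably singular at the endpoint $s=1$; this is handled by splitting the $u$-integral into $[0,1]$ and $[1,2|J|]$ and noting that the singularity at $u=2|J|$ is integrable for every fixed $d\ge 2$. No other obstacle is expected.
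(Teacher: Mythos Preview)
Your proposal is correct and follows essentially the same route as the paper: both bound $e^{\omega\cdot J}\le e^{|J|}$, reduce $Z(J)$ to a one-dimensional integral, and apply the Laplace-type substitution $u=|J|(1-s)$ (equivalently $u=|J|(1-\cos\theta)$ in the paper) to extract the factor $e^{|J|}|J|^{-(d-1)/2}$. The only cosmetic difference is that the paper records the full asymptotic $Z(J)\sim c_d\,e^{|J|}|J|^{-(d-1)/2}$ as $|J|\to\infty$, whereas you obtain the required lower bound directly by restricting the $u$-integral to $[0,1]$; both arguments yield the claim.
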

\begin{proof}
The von Mises distribution can be written as 
\begin{align}
   M_J(\omega) = c_d \,e^{\omega\cdot J} \left(\int_0^\pi e^{|J|\cos\theta} \sin^{d-2}\theta \,d\theta\right)^{-1} \,.
\end{align}
Its maximum
\begin{align}
   \sup_{\omega} M_J(\omega) = c_d \left(\int_0^\pi e^{|J|(\cos\theta-1)} \sin^{d-2}\theta \,d\theta\right)^{-1} \,,
\end{align}
is a smooth function of $|J|\in [0,\infty)$. Its behaviour for large $|J|$ can be studied by the new coordinate $u = |J|(1-\cos\theta)$, giving
\begin{align}
  \int_0^\pi e^{|J|(\cos\theta-1)} \sin^{d-2}\theta \,d\theta = \frac{1}{|J|} \int_0^{2|J|} e^{-u} \left( \frac{2u}{|J|} - \frac{u^2}{|J|^2}\right)^{(d-3)/2} du
  \approx |J|^{-\frac{d-1}{2}} 2^{\frac{d-3}{2}} \Gamma\left(\frac{d-1}{2}\right) \,,
\end{align}
as $|J|\to\infty$.
\end{proof}

\begin{lemma} \label{lem:epsuniform}
Let the assumptions of Theorem \ref{th:existence_solution} hold. Then there exist $c,C>0$, independent of $\eps>0$, such that the solution 
$F_\eps$ of \eqref{eq:approxBGK} satisfies
    \begin{align}
        \int_{\T^d} \int_{\Sd} F_\eps(t,x,\omega) (1+|\log F_\eps(t,x,\omega)|) \ud{x} \ud{\omega} \le C(1+e^{ct})\,.
    \end{align}
\end{lemma}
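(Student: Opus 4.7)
The strategy is to establish a Gronwall differential inequality $\Psi_\eps'(t)\le c\,\Psi_\eps(t)+C$ for $\Psi_\eps(t):=\int_{\T^d\times\Sd} F_\eps\log F_\eps\,\ud x\,\ud\omega$, with $c,C$ independent of $\eps$. Since $-s\log s\le 1/e$ on $[0,1]$ gives $\int F_\eps|\log F_\eps|\le\Psi_\eps+2|\T^d\times\Sd|/e$, a one-sided upper bound on $\Psi_\eps$ propagates to the full norm appearing in~\eqref{eq:entropybound}. For fixed $\eps>0$, existence of a nonnegative mild solution $F_\eps\in C([0,\infty),L^1)$ is standard: Lemma~\ref{lem:bounds_Mises} combined with $|J^\eps_{F_\eps}|\le 1/\eps$ makes $\rho_{F_\eps} M_{J^\eps_{F_\eps}}$ an admissible source for a Duhamel fixed-point argument, and positivity is propagated.

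I would then compute $\Psi_\eps'$ by multiplying~\eqref{eq:approxBGK} by $1+\log F_\eps$ and integrating over $\T^d\times\Sd$; the transport term vanishes via the identity $\omega\cdot\nabla_x F_\eps\,(1+\log F_\eps)=\omega\cdot\nabla_x(F_\eps\log F_\eps)$ and periodicity, while mass conservation removes the constant, giving
\[
\Psi_\eps'(t)=\int_{\T^d\times\Sd}\bigl(\rho_{F_\eps}M_{J^\eps_{F_\eps}}-F_\eps\bigr)\log F_\eps\,\ud x\,\ud\omega.
\]
Applying the convexity inequality $(a-b)(\log a-\log b)\ge 0$ with $a=F_\eps$ and $b=\rho_{F_\eps}M_{J^\eps_{F_\eps}}$ upgrades this to
\[
\Psi_\eps'(t)\le\int\bigl(\rho_{F_\eps}M_{J^\eps_{F_\eps}}-F_\eps\bigr)\log\bigl(\rho_{F_\eps}M_{J^\eps_{F_\eps}}\bigr)\,\ud x\,\ud\omega.
\]

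Expanding $\log(\rho_{F_\eps}M_{J^\eps_{F_\eps}})=\log\rho_{F_\eps}+\omega\cdot J^\eps_{F_\eps}-\log Z(J^\eps_{F_\eps})$, the $\log\rho_{F_\eps}$ contributions cancel via $\int M_J\,\ud\omega=1$ and the $\log Z(J^\eps_{F_\eps})$ contributions cancel via $\int F_\eps\,\ud\omega=\rho_{F_\eps}$. Using $\int\omega M_J\,\ud\omega=\cc(|J|)J/|J|$ and the collinearity $J^\eps_{F_\eps}\cdot J_{F_\eps}=|J^\eps_{F_\eps}|\,|J_{F_\eps}|$, the inequality reduces to
\[
\Psi_\eps'(t)\le\int_{\T^d}\bigl(\rho_{F_\eps}\,\cc(|J^\eps_{F_\eps}|)|J^\eps_{F_\eps}|-|J^\eps_{F_\eps}|\,|J_{F_\eps}|\bigr)\,\ud x.
\]
Invoking $\cc(r)\le r/d$, which follows from the monotone decay of $\cc(r)/r$ from $\cc'(0)=1/d$ noted in the proof of Proposition~\ref{prop:space-homogeneous-stability}, and $|J^\eps_{F_\eps}|\le|J_{F_\eps}|$, I further bound this by $\frac{1}{d}\int_{\T^d}|J^\eps_{F_\eps}|\,|J_{F_\eps}|(\rho_{F_\eps}-d)_+\,\ud x$.

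The hard part is closing the Gronwall inequality with $\eps$-uniform constants: the naive estimate $|J^\eps_{F_\eps}|\,|J_{F_\eps}|(\rho_{F_\eps}-d)_+\le\rho_{F_\eps}^3$ is too crude, since $\Psi_\eps$ only controls $L\log L$-integrability of $\rho_{F_\eps}$ via the Jensen bound $\int_{\T^d}\rho_{F_\eps}\log\rho_{F_\eps}\,\ud x\le\Psi_\eps+\mu\log|\Sd|$. I would close the loop by combining this entropy bound with propagation of higher moments of $\rho_{F_\eps}$, exploiting the continuity equation $\partial_t\rho_{F_\eps}+\nabla_x\cdot J_{F_\eps}=0$ (from integrating~\eqref{eq:approxBGK} over $\omega$) together with the pointwise drift bound $|J_{F_\eps}/\rho_{F_\eps}|\le 1$, so that the cubic right-hand side can be absorbed into $c\,\Psi_\eps+C$. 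Gronwall then yields~\eqref{eq:entropybound} with constants independent of $\eps$.
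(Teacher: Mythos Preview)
Your opening steps are correct: the entropy derivative, the convexity bound $(\rho M-F)\log F\le(\rho M-F)\log(\rho M)$, the expansion $\log(\rho M_{J^\eps})=\log\rho+\omega\cdot J^\eps-\log Z(J^\eps)$, and the resulting identity
\[
\Psi_\eps'(t)\le\int_{\T^d}\bigl(\rho_{F_\eps}\,\cc(|J^\eps_{F_\eps}|)\,|J^\eps_{F_\eps}|-|J^\eps_{F_\eps}|\,|J_{F_\eps}|\bigr)\ud x
\]
are all fine, as is the subsequent bound by $\tfrac{1}{d}\int|J^\eps_{F_\eps}||J_{F_\eps}|(\rho_{F_\eps}-d)_+\,\ud x$.

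The gap is exactly where you identify it, and your proposed fix does not work. The right-hand side is genuinely cubic in $\rho_{F_\eps}$ (take $|J_{F_\eps}|$ close to $\rho_{F_\eps}$), while $\Psi_\eps$ only controls $\int\rho_{F_\eps}\log\rho_{F_\eps}$. The continuity equation $\partial_t\rho+\nabla_x\cdot J=0$ with $|J|\le\rho$ does not propagate $L^p$ bounds for $p>1$: differentiating $\int\rho^p$ produces $p(p-1)\int\rho^{p-2}\nabla\rho\cdot J$, which requires regularity you do not have. So the loop cannot be closed along this route with $\eps$-uniform constants.

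The paper's proof avoids this by \emph{not} expanding $\log M_J$. The decisive input is Lemma~\ref{lem:bounds_Mises}, the pointwise bound $M_J(\omega)\le C_d\bigl(1+|J|^{(d-1)/2}\bigr)$, which gives $\log M_J\le\log C_d+\tfrac{d-1}{2}\log(1+|J|)$. Combined with $|J^\eps_{F_\eps}|\le|J_{F_\eps}|\le\rho_{F_\eps}$ and the elementary inequality $\rho\log(1+\rho^{(d-1)/2})\le c(1+\rho\log\rho)$, this yields $\int\rho M\log M\lesssim 1+\int\rho\log\rho$, and Jensen closes the Gronwall loop. The point you miss is that the entropy of the von Mises distribution satisfies $\int M_J\log M_J=\cc(|J|)|J|-\log Z(|J|)=O(\log|J|)$, not $O(|J|)$; by cancelling the two $\log Z$ contributions against each other you destroy precisely this logarithmic gain.
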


\begin{proof}
By the mass conservation and by the boundedness of the domain, the integral of $F_\eps$ and the product of $F_\eps$ with the negative part of the logarithm are uniformly
bounded. Therefore it suffices to show uniform boundedness of the entropy functional
$$\mathcal{E}[F_\eps]=\int_{\T^d} \int_{\Sd} \left(\frac{1}{e}+F_\eps\log F_\eps \right) \ud x\ud\omega \ge 0\,,$$
with the time derivative
\begin{align}
 & \frac{\ud}{\ud t}\mathcal{E}[F_\eps]  = \int_{\T^d} \int_{\Sd} ( \rho_{F_\eps} M_{J_{F_\eps}^\eps}-F_\eps) \log F_\eps\, \ud x \ud \omega \nonumber\\
   &= \frac{|\Sp^{d-1}|}{e} - \mathcal{E}[F_\eps] - \int_{\T^d} \int_{\Sd} \rho_{F_\eps} M_{J_{F_\eps}^\eps}\log\lp \frac{\rho_{F_\eps} M_{J^\eps_{F_\eps}}}{F_\eps} \rp \ud x \ud \omega
    + \int_{\T^d} \int_{\Sd} \rho_{F_\eps} M_{J^\eps_{F_\eps}}\log (\rho_{F_\eps} M_{J^\eps_{F_\eps}})  \nonumber\\
  &\leq \frac{|\Sp^{d-1}|}{e}- \mathcal{E}[F_\eps] + \int_{\T^d} \int_{\Sd} \rho_{F_\eps} M_{J^\eps_{F_\eps}}\log (\rho_{F_\eps} M_{J^\eps_{F_\eps}})  \ud x \ud \omega \,, \label{eq:aux_E_estimate}
\end{align}
where the inequality is due to the nonnegativity of the relative entropy
\begin{align}
  &\int_{\T^d} \int_{\Sd} \rho_F M_{J_F^\eps}\log\lp \frac{\rho_F M_{J^\eps_F}}{F} \rp \ud x \ud \omega \\
  =& \int_{\T^d} \int_{\Sd} \left(\rho_F M_{J_F^\eps}\log\lp \frac{\rho_F M_{J^\eps_F}}{F} \rp - \rho_F M_{J_F^\eps} + F\right)\ud x \ud \omega \ge 0\,.
\end{align}
After splitting the last term in \eqref{eq:aux_E_estimate} by the functional equation of the logarithm, we use the Jensen inequality
$$
    \frac{\rho_F}{|\Sp^{d-1}|} \log \frac{\rho_F}{|\Sp^{d-1}|} \le \frac{1}{|\Sp^{d-1}|} \int_{\Sp^{d-1}} F \log F\,\ud\omega
$$
for the first part to obtain
$$
   \int_{\T^d} \int_{\Sd} \rho_{F_\eps} M_{J^\eps_{F_\eps}}\log (\rho_{F_\eps} )  \ud x \ud \omega =  \int_{\T^d} \rho_{F_\eps} \log (\rho_{F_\eps} )  \ud x 
   \le \|F^\circ\|_{L^1(\TT^d\times \Sp^{d-1})} \log |\Sp^{d-1}| - \frac{|\Sp^{d-1}|}{e} + \mathcal{E}[{F_\eps}] \,.
$$
For the second part we use Lemma \ref{lem:bounds_Mises}, as well as $|J_F^\eps|\le |J_F| \le \rho_F$:
\begin{align}
  \int_{\T^d} \int_{\Sd} \rho_{F_\eps} M_{J^\eps_{F_\eps}}\log (M_{J^\eps_{F_\eps}} )  \ud x \ud \omega 
   \le \int_{\T^d} \rho_{F_\eps} \log \left(C_d \left(1+ \left|\rho_{F_\eps}\right|^{(d-1)/2} \right) \right)  \ud x \,.
\end{align}
It is easily seen that there exists $c\ge \frac{d-1}{2}$ such that $\rho\log(1+\rho^{(d-1)/2})\le c(1+\rho\log\rho)$. Applying this and collecting our results we have
\begin{align}
  \frac{\ud}{\ud t}\mathcal{E}[F_\eps]  \le  \|F^\circ\|_{L^1(\TT^d\times \Sp^{d-1})} \log (C_d|\Sp^{d-1}|^{1+c}) + c\left(1 - \frac{|\Sp^{d-1}|}{e} + \mathcal{E}[{F_\eps}] \right)\,.
\end{align}
This differential inequality implies the desired uniform (in $\eps$) boundedness of $\mathcal{E}[{F_\eps}]$ on finite time intervals.
\end{proof}

Lemma \ref{lem:epsuniform} ensures uniform integrability of $\{F_\eps, \eps>0\}$ and therefore the weak convergence
\begin{align}
    F_\eps &\rightharpoonup F \quad \text{in } L^1((0,T) \times \T^d \times \Sd) \,,
\end{align}
for appropriate subsequences by the Dunford-Pettis theorem (as in, e.g., \cite[page 196]{perthame1989global}).
In the proof of Lemma \ref{lem:epsuniform} we have bounded the last term in \eqref{eq:aux_E_estimate} in terms of $\mathcal{E}[{F_\eps}]$, implying
uniform integrability of $\rho_{F_\eps} M_{J^\eps_{F_\eps}}$. An application of the averaging lemma \ref{lem:av-L1} implies the strong convergence
$$
      \rho_{F_\eps} \rightarrow\rho_F \quad \text{in } L^1((0,T) \times \T^d) \,,
$$
and therefore, by $|J_{F_1}-J_{F_2}| \le |\rho_{F_1}-\rho_{F_2}|$, also
$$
     J_{F_\eps} \rightarrow J_F \quad \text{in } L^1((0,T) \times \T^d) \,,
$$
again for appropriate subsequences. A further application of the Dunford-Pettis theorem gives weak convergence of $\rho_{F_\eps} M_{J^\eps_{F_\eps}}$. Its 
weak limit is given by $\rho_F M_{J_F}$ since, again taking subsequences, the convergence of $\rho_{F_\eps}$ and $J_{F_\eps}$ (and therefore also of $J^\eps_{F_\eps}$)
is pointwise almost everywhere. Since the other terms in \eqref{eq:approxBGK} are linear, we can pass to the limit.

Since $f,g:= \rho_F M_{J_F}\in L^\infty((0,T), L^1(\T^d\times \Sd))$, the weak solution is actually a mild solution. In particular, for $0\le t_1\le t_2$ we have
$$
   f(t_2,x,\omega) = e^{t_1-t_2}f(t_1,x-\omega (t_2-t_1),\omega) + \int_{t_1}^{t_2} e^{t-t_2}g(t,x-\omega(t_2-t),\omega)\ud t \,.
$$
In the limit $t_2\to t_1+$, the first term on the right hand side converges to $f(t=t_1)$ (Fr\'echet-Kolmogorow) and the second to zero in $L^1(\T^d\times \Sd)$.
This proves $F\in C([0,\infty), L^1(\T^d\times \Sd))$ and completes the proof of Theorem \ref{th:existence_solution}.

\section*{Appendix A: Asymptotics close to the bifurcation point}

We look for an expansion of the solution $L(\mu)>0$ of \eqref{eq:consistency_relation2} for $0< \mu-d \ll 1$, with $L(d) = 0$.
The equation can be written as
$$
\mu \int^\pi_{0} \cos\theta \,e^{L\cos\theta} \sin^{d-2}\theta \ud \theta = L \int^\pi_{0}  e^{L\cos\theta} \sin^{d-2}\theta \ud \theta \,,
$$
and with Taylor expansion around $L=0$ we get
$$
    \mu L I_{2,d-2} + \mu \frac{L^3}{6} I_{4,d-2} = L I_{0,d-2} + \frac{L^3}{2} I_{2,d-2} + O(L^5) \,,
$$
where
$$
    I_{k,m} := \int_0^\pi \cos^k\theta \sin^m\theta \ud\theta \,.
$$
These integrals satisfy the recursions
$$
    I_{4,m} = I_{2,m} - I_{2,m+2} \,,\qquad I_{2,m} = \frac{1}{m+1} I_{0,m+2} \,,\qquad I_{0,m} = \frac{m-1}{m} I_{0,m-2} \,,
$$
which can be shown by trigonometric identities and by integrations by parts.
Using them, all the integrals appearing in the equation above can be written as multiples of $I_{0,d-2}$. This leads to
\begin{equation}\label{L-as}
    L^2 = (d+2)(\mu-d) + O((\mu-d)^2) \,,\qquad\mbox{as } \mu\to d+\,.
\end{equation}

\section*{Appendix B: Averaging lemmas}

Although the averaging lemma we need here cannot really be considered as a new result (see, e.g., \cite{jabin2009}), its precise formulation does not seem to be readily available
in the literature. We follow along the lines of \cite{golse1986regularity}, first proving a $L^2$-averaging lemma and then use it for the extension to $L^1$.

\begin{lemma}\label{lem:L2av}
Let $\mu$ be a positive bounded measure on $\Sp^{d-1}$, satisfying for appropriate $C,\beta>0$
\begin{align}\label{av-ass}
   \esssup_{e\in\Sp^{d-1},\, y\in \R^d} \mu\left(\left\{ \omega\in\Sp^{d-1}:\, |\omega\cdot e + y|\le \eps \right\}\right) \le C\eps^\beta \,,\qquad \mbox{for all } \eps>0\,. 
\end{align}
Let $f,g \in L^2(dt \otimes dx \otimes d\mu)$, $\gamma>0$, satisfy
\begin{align}\label{av-equ}
   \partial_t f + \gamma \omega\cdot\nabla_x f + f = g \qquad \mbox{in } \R\times\T^d\times\Sp^{d-1}\,.
\end{align}
Then there exists $c>0$ such that
\beqar
  \left\|\tilde f\right\|_{H^{\beta/(2+\beta)}(dt \otimes dx)} \le c \|g\|_{L^2(dt \otimes dx \otimes d\mu)} \,,\qquad \tilde f := \int_{\Sp^{d-1}} f\,d\mu \,.
\eeqar
\end{lemma}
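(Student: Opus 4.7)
I would adapt the classical Fourier-side argument of \cite{golse1986regularity}. Taking the Fourier transform in $t\in\R$ and Fourier series in $x\in\TT^d$, the transport equation \eqref{av-equ} becomes algebraic:
\begin{equation*}
    \hat f(\tau,k,\omega) = \frac{\hat g(\tau,k,\omega)}{1+i(\tau+\gamma k\cdot\omega)} \,,
\end{equation*}
and the Fourier symbol of the average is $\hat{\tilde f}(\tau,k) = \int_{\Sp^{d-1}} \hat f(\tau,k,\omega)\,\ud\mu(\omega)$. A Cauchy-Schwarz in $\omega$ with respect to $\mu$ gives the key pointwise estimate
\begin{equation*}
    \bigl|\hat{\tilde f}(\tau,k)\bigr|^2 \leq I(\tau,k)\cdot \|\hat g(\tau,k,\cdot)\|^2_{L^2(\ud\mu)} \,, \qquad I(\tau,k) := \int_{\Sp^{d-1}} \frac{\ud\mu(\omega)}{1+(\tau+\gamma k\cdot\omega)^2} \,.
\end{equation*}
By Plancherel, the lemma will follow from the symbol bound $(1+|\tau|^2+|k|^2)^{\beta/(\beta+2)} I(\tau,k) \leq C$ uniformly in $(\tau,k)$.

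The core of the proof is establishing this symbol bound, for which I would split into two regimes. When $|\tau|\geq 2\gamma|k|$, the elementary lower bound $|\tau+\gamma k\cdot\omega|\geq |\tau|/2$ gives $I(\tau,k)\leq 4\mu(\Sp^{d-1})/(1+|\tau|^2)$, and since $|k|\lesssim|\tau|$ in this regime the required bound is automatic. When $|\tau|\leq 2\gamma|k|$, I split the $\omega$-integration at a parameter $\eta>0$: on the small-divisor set $\{\omega:|\tau+\gamma k\cdot\omega|\leq \eta\}$, the non-degeneracy assumption \eqref{av-ass} (applied with unit vector $e=k/|k|$ and scalar shift $y=\tau/(\gamma|k|)$) bounds the $\mu$-measure by $C(\eta/(\gamma|k|))^\beta$, so its contribution to $I$ is $\leq C(\eta/(\gamma|k|))^\beta$; on the complement the integrand is $\leq \eta^{-2}$, contributing at most $\mu(\Sp^{d-1})\eta^{-2}$. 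Thus
\begin{equation*}
    I(\tau,k) \leq C\, \eta^\beta (\gamma|k|)^{-\beta} + \mu(\Sp^{d-1})\, \eta^{-2} \,,
\end{equation*}
and the optimal choice $\eta\simeq(\gamma|k|)^{\beta/(\beta+2)}$ yields $I(\tau,k)\leq C(\gamma|k|)^{-2\beta/(\beta+2)}$, which together with $|\tau|\leq 2\gamma|k|$ closes the symbol bound in this regime.

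Finally, Plancherel in $(t,x)$ gives
\begin{equation*}
    \|\tilde f\|^2_{H^{\beta/(2+\beta)}(\ud t\otimes\ud x)} \leq \sup_{\tau,k}\bigl[(1+|\tau|^2+|k|^2)^{\beta/(\beta+2)} I(\tau,k)\bigr]\cdot\|g\|^2_{L^2(\ud t\otimes\ud x\otimes\ud\mu)} \leq C\|g\|^2_{L^2} \,,
\end{equation*}
which is the claim. The main obstacle is the careful balance in the choice of $\eta$: the Sobolev exponent $\beta/(\beta+2)$ is precisely dictated by trading the $\eta^\beta$ measure control from \eqref{av-ass} against the $\eta^{-2}$ tail, so the result is sharp relative to the non-degeneracy hypothesis and would degrade under any weaker measure bound. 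Secondary bookkeeping concerns are the correct transition between the two $(\tau,k)$-regimes, and the standard fact that for $\mu$ supported on $\Sp^{d-1}$ the uniform surface measure satisfies \eqref{av-ass} with $\beta=1$, which is the case needed to pass to $L^1$ averages in the next step.
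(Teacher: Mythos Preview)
Your proposal is correct and follows essentially the same argument as the paper: Fourier transform in $(t,x)$, Cauchy--Schwarz in $\omega$, then the same two-regime split $|\tau|\gtrless 2\gamma|k|$ with a threshold decomposition on the small-divisor set, optimized to produce the exponent $\beta/(2+\beta)$. The only cosmetic difference is that the paper parameterizes the split by a power $\alpha$ of $|k|$ (taking $\alpha=2/(2+\beta)$) rather than by your threshold $\eta$, and absorbs $\gamma$ into the Fourier variable $k\in\gamma\mathbb{Z}^d$.
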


\begin{proof}
As in Section \ref{sec:lin} we use Fourier series with respect to $x$ (with Fourier variable $k\in \gamma\mathbb Z^d$), but the Fourier transform (instead of the Laplace transform)
with respect to $t$ (with Fourier variable $\tau$). Denoting the Fourier transform by $\mathcal F[\cdot]$ we have
\beqar
  \mathcal F\left[\tilde f\right](\tau,k) = \int_{\Sp^{d-1}} \frac{\mathcal F[g](\tau,k,\omega)}{1+i(\tau+k\cdot\omega)}d\mu(\omega) \,.
\eeqar
Now we proceed similarly to the proof of Lemma \ref{lem:FL-est} with $x=0$, $y=\tau$:
\beqar
  \left|\mathcal F\left[\tilde f\right](\tau,k)\right| \le  \|\mathcal F[g](\tau,k,\cdot)\|_{L^2(d\mu)}  \left\| \frac{1}{1+i(\tau+k\cdot\omega)} \right\|_{L^2(d\mu)} \,.
\eeqar
For $|\tau|\le 2|k|$ we consider the set $A_\alpha$ defined in \eqref{A_alpha}, satisfying $\mu(A_\alpha)\le C \langle k\rangle^{-\alpha\beta}$ by \eqref{av-ass}. Then
\begin{eqnarray*}
  \left\| \frac{1}{1+i(\tau+k\cdot\omega)} \right\|_{L^2(d\mu)}^2 &=& \int_{A_\alpha} \frac{d\mu}{1 + (\tau+k\cdot\omega)^2} 
  + \int_{\Sp^{d-1}\setminus A_\alpha} \frac{d\mu}{1 + (\tau+k\cdot\omega)^2} \\
  &\le& C_1\left( \langle k\rangle^{-\alpha\beta} + \langle k\rangle^{2\alpha-2}\right) = 2C_1 \langle k\rangle^{-2\beta/(2+\beta)} \\
   &\le& C_2 \left( 1 + |k|^{2\beta/(2+\beta)} + |\tau|^{2\beta/(2+\beta)} \right)^{-1}\,,
\end{eqnarray*}
with $\alpha = \frac{2}{2+\beta}$.
For $|\tau|\ge 2|k|$ we have $|\tau + k\cdot\omega|\ge |\tau|/2$ and therefore
$$
  \left\| \frac{1}{1+i(\tau+k\cdot\omega)} \right\|_{L^2(d\mu)}^2   \le \frac{C_3}{\langle\tau\rangle^2} \le \frac{C_4}{1+\tau^2 + |k|^2} 
  \le C_5 \left( 1 + |k|^{2\beta/(2+\beta)} + |\tau|^{2\beta/(2+\beta)} \right)^{-1} \,.
$$
Combination of these results gives 
$$
    \left( 1 + |k|^{2\beta/(2+\beta)} + |\tau|^{2\beta/(2+\beta)} \right) \left|\mathcal F\left[\tilde f\right](\tau,k)\right|^2 \le c^2  \|\mathcal F[g](\tau,k,\cdot)\|_{L^2(d\mu)}^2 \,,
$$
and an application of Parseval's and Plancherel's identities completes the proof.
\end{proof}

\begin{lemma}\label{lem:av-L1}
Let \eqref{av-ass} hold. Define the operator $T$ from $L^1(dt\otimes dx \otimes d\mu)$ into $L^1(dt\otimes dx)$
by $Tg = \tilde f$, where $f$ is the unique solution in $L^1(dt\otimes dx \otimes d\mu)$ of \eqref{av-equ}.
If $K \subset L^1(dt\otimes dx\otimes d\mu)$ is bounded and uniformly integrable, then $T(K)$ is
compact in $L^1_{loc}(dt\otimes dx)$.
\end{lemma}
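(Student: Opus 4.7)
The plan is a truncation argument reducing the $L^1$ averaging statement to the $L^2$ statement of Lemma \ref{lem:L2av}. First I record that $T$ is well-defined and bounded on $L^1(dt\otimes dx\otimes d\mu)$: integrating \eqref{av-equ} along characteristics yields
$$
f(t,x,\omega)=\int_{-\infty}^{t} e^{s-t}\,g(s,x-\gamma(t-s)\omega,\omega)\,\ud s,
$$
so $\|f\|_{L^1(dt\otimes dx\otimes d\mu)}\le \|g\|_{L^1(dt\otimes dx\otimes d\mu)}$ and $\|Tg\|_{L^1(dt\otimes dx)}\le \mu(\Sp^{d-1})\|g\|_{L^1}$. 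Small perturbations of $g$ in $L^1$ therefore produce small perturbations of $Tg$ in $L^1$.

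Fix a compact set $\Omega=[-T,T]\times \TT^d$; by a standard total-boundedness argument it suffices, for every $\eta>0$, to exhibit a relatively compact family $\mathcal{K}_\eta\subset L^1(\Omega)$ with $\mathrm{dist}_{L^1(\Omega)}(Tg|_\Omega,\mathcal{K}_\eta)\le \eta$ uniformly for $g\in K$. To build it, I truncate both in time and in amplitude: for $M>T$ and $N>0$ set
$$
g^{M,N}:=g\,\1_{\{|t|\le M\}}\,\1_{\{|g|\le N\}},\qquad r^{M,N}:=g-g^{M,N}.
$$
The characteristic representation gives the quantitative estimate
$$
\|T r^{M,N}\|_{L^1(\Omega)} \le 2\sinh(T)\,e^{-M}\,\|g\|_{L^1}+\mu(\Sp^{d-1})\,\|g\,\1_{\{|g|>N\}}\|_{L^1},
$$
where the first term collects past times $s<-M$ (which the exponential kernel attenuates) and the second is the amplitude tail. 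Boundedness of $K$ in $L^1$ makes the first term small for large $M$, and uniform integrability of $K$ makes the second small for large $N$, uniformly in $g\in K$; thus $M(\eta),N(\eta)$ can be chosen so that the combined error is $\le\eta$ for every $g\in K$.

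For the surviving truncated piece, $g^{M,N}\in L^2(dt\otimes dx\otimes d\mu)$ with
$$
\|g^{M,N}\|_{L^2}^2\le N\,\|g\|_{L^1}\le N\sup_{h\in K}\|h\|_{L^1},
$$
a bound independent of $g$. Lemma \ref{lem:L2av} then gives a uniform bound for $Tg^{M,N}$ in $H^{\beta/(2+\beta)}(dt\otimes dx)$, and the compact Sobolev embedding $H^{\beta/(2+\beta)}(\Omega)\hookrightarrow\hookrightarrow L^2(\Omega)\hookrightarrow L^1(\Omega)$ (using $|\Omega|<\infty$) implies that $\mathcal{K}_\eta:=\{Tg^{M,N}|_\Omega:g\in K\}$ is relatively compact in $L^1(\Omega)$. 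Combining the two estimates proves total boundedness of $T(K)|_\Omega$ in $L^1(\Omega)$, hence the claim. The main (mild) obstacle is the nonlocality in time of $T$: one cannot simply restrict $g$ to $\Omega$ without altering $Tg$, and must instead use the exponential decay of the characteristic kernel together with the uniform $L^1$ bound on $K$ to discard the time tail uniformly.
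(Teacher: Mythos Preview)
Your argument is correct and follows exactly the route the paper intends: the paper omits the proof and simply points to Proposition~3 in \cite{golse1986regularity}, whose content is precisely the truncation-to-$L^2$ strategy you carry out (truncate $g$ so that it lies in a bounded set of $L^2$, apply the $L^2$ averaging Lemma~\ref{lem:L2av} to get $H^{\beta/(2+\beta)}$ compactness, and control the remainder in $L^1$ via uniform integrability). Your additional time-truncation step, using the exponential decay of the characteristic kernel, is a minor but appropriate adaptation to the unbounded time variable and is consistent with the ``obvious changes'' the paper alludes to.
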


We omit the proof, which is the same as the proof of Proposition 3 in \cite{golse1986regularity} with the obvious changes, in particular using Lemma \ref{lem:L2av}
instead of Theorem 1 of \cite{golse1986regularity}.

\bibliographystyle{abbrv}
\bibliography{biblio}

\begin{thebibliography}{10}

\bibitem{Aceves2019}
P.~Aceves-Sánchez, M.~Bostan, J.-A. Carrillo, and P.~Degond.
\newblock Hydrodynamic limits for kinetic flocking models of cucker-smale type.
\newblock {\em Mathematical Biosciences and Engineering}, 16(6):7883--7910,
  2019.

\bibitem{achleitner2016linear}
F.~Achleitner, A.~Arnold, and E.~A. Carlen.
\newblock On linear hypocoercive {BGK} models.
\newblock In {\em From Particle Systems to Partial Differential Equations III:
  Particle Systems and PDEs III, Braga, Portugal, December 2014}, pages 1--37.
  Springer, 2016.

\bibitem{achleitner2023hypocoercivity}
F.~Achleitner, A.~Arnold, V.~Mehrmann, and E.~A. Nigsch.
\newblock Hypocoercivity in hilbert spaces.
\newblock {\em arXiv preprint arXiv:2307.08280}, 2023.

\bibitem{bouin2020hypocoercivity}
E.~Bouin, J.~Dolbeault, S.~Mischler, C.~Mouhot, and C.~Schmeiser.
\newblock Hypocoercivity without confinement.
\newblock {\em Pure and Applied Analysis}, 2(2):203--232, 2020.

\bibitem{cao2020asymptotic}
F.~Cao, S.~Motsch, A.~Reamy, and R.~Theisen.
\newblock Asymptotic flocking for the three-zone model.
\newblock {\em Mathematical Biosciences and Engineering}, 17(6):7692--7707,
  2020.

\bibitem{carrillo2017review}
J.~A. Carrillo, Y.-P. Choi, and S.~P. Perez.
\newblock A review on attractive--repulsive hydrodynamics for consensus in
  collective behavior.
\newblock {\em Active Particles, Volume 1: Advances in Theory, Models, and
  Applications}, pages 259--298, 2017.

\bibitem{carrillo2010particle}
J.~A. Carrillo, M.~Fornasier, G.~Toscani, and F.~Vecil.
\newblock Particle, kinetic, and hydrodynamic models of swarming.
\newblock {\em Mathematical modeling of collective behavior in socio-economic
  and life sciences}, pages 297--336, 2010.

\bibitem{cucker2007emergent}
F.~Cucker and S.~Smale.
\newblock Emergent behavior in flocks.
\newblock {\em IEEE Transactions on automatic control}, 52(5):852--862, 2007.

\bibitem{degond2020phase}
P.~Degond, A.~Diez, A.~Frouvelle, and S.~Merino-Aceituno.
\newblock Phase transitions and macroscopic limits in a {BGK} model of
  body-attitude coordination.
\newblock {\em Journal of Nonlinear Science}, 30(6):2671--2736, 2020.

\bibitem{degond2013macroscopic}
P.~Degond, A.~Frouvelle, and J.-G. Liu.
\newblock Macroscopic limits and phase transition in a system of self-propelled
  particles.
\newblock {\em Journal of nonlinear science}, 23:427--456, 2013.

\bibitem{degond2015phase}
P.~Degond, A.~Frouvelle, and J.-G. Liu.
\newblock Phase transitions, hysteresis, and hyperbolicity for self-organized
  alignment dynamics.
\newblock {\em Archive for Rational Mechanics and Analysis}, 216(1):63--115,
  2015.

\bibitem{degond2021kinetic}
P.~Degond, A.~Frouvelle, and J.-G. Liu.
\newblock From kinetic to fluid models of liquid crystals by the moment method.
\newblock {\em arXiv preprint arXiv:2106.16228}, 2021.

\bibitem{degond2008continuum}
P.~Degond and S.~Motsch.
\newblock Continuum limit of self-driven particles with orientation
  interaction.
\newblock {\em Mathematical Models and Methods in Applied Sciences},
  18(supp01):1193--1215, 2008.

\bibitem{dolbeault2015hypocoercivity}
J.~Dolbeault, C.~Mouhot, and C.~Schmeiser.
\newblock Hypocoercivity for linear kinetic equations conserving mass.
\newblock {\em Transactions of the American Mathematical Society},
  367(6):3807--3828, 2015.

\bibitem{duan2011hypocoercivity}
R.~Duan.
\newblock Hypocoercivity of linear degenerately dissipative kinetic equations.
\newblock {\em Nonlinearity}, 24(8):2165, 2011.

\bibitem{frouvelle2012continuum}
A.~Frouvelle.
\newblock A continuum model for alignment of self-propelled particles with
  anisotropy and density-dependent parameters.
\newblock {\em Mathematical Models and Methods in Applied Sciences},
  22(07):1250011, 2012.

\bibitem{frouvelle}
A.~Frouvelle.
\newblock {\em {Particules auto-propuls{\'e}es, transitions de phase et
  contraintes g{\'e}om{\'e}triques}}.
\newblock Habilitation {\`a} diriger des recherches, {Universit{\'e}
  Paris-Dauphine PSL}, Mar. 2023.

\bibitem{frouvelle2012dynamics}
A.~Frouvelle and J.-G. Liu.
\newblock Dynamics in a kinetic model of oriented particles with phase
  transition.
\newblock {\em SIAM Journal on Mathematical Analysis}, 44(2):791--826, 2012.

\bibitem{ginelli2010large}
F.~Ginelli, F.~Peruani, M.~B{\"a}r, and H.~Chat{\'e}.
\newblock Large-scale collective properties of self-propelled rods.
\newblock {\em Physical review letters}, 104(18):184502, 2010.

\bibitem{golse1986regularity}
F.~Golse, P.-L. Lions, B.~Perthame, and R.~Sentis.
\newblock Regularity of the moments of the solution of a transport equation.
\newblock {\em Journal of Functional Analysis}, 76:110--125, 1988.

\bibitem{herau2006hypocoercivity}
F.~H{\'e}rau.
\newblock Hypocoercivity and exponential time decay for the linear
  inhomogeneous relaxation {B}oltzmann equation.
\newblock {\em Asymptotic Analysis}, 46(3-4):349--359, 2006.

\bibitem{jabin2009}
P.-E. Jabin.
\newblock Averaging lemmas and dispersion estimates for kinetic equations.
\newblock {\em Riv. Mat. Univ. Parma}, 1:71--138, 2009.

\bibitem{kang2023dynamics}
M.-J. Kang and J.~Morales.
\newblock Dynamics of a spatially homogeneous {V}icsek model for oriented
  particles on a plane.
\newblock {\em Analysis and Applications}, 2023.

\bibitem{perthame1989global}
B.~Perthame.
\newblock Global existence to the {BGK} model of {B}oltzmann equation.
\newblock {\em Journal of Differential Equations}, 82(9-10):191--205, 1989.

\bibitem{vicsek1995novel}
T.~Vicsek, A.~Czir{\'o}k, E.~Ben-Jacob, I.~Cohen, and O.~Shochet.
\newblock Novel type of phase transition in a system of self-driven particles.
\newblock {\em Physical review letters}, 75(6):1226, 1995.

\bibitem{dric2009hypocoercivity}
C.~Villani.
\newblock {\em Hypocoercivity}, volume 367.
\newblock American Mathematical Soc., 2009.

\end{thebibliography}

\end{document}